\newtheorem{thm}{Theorem}[section]
\newtheorem{cor}[thm]{Corollary}
\newtheorem{defn}{Definition}[section]
\newtheorem{exam}[defn]{Example}
\newtheorem{rem}{Remark}[section]
\begin{document}

\title[]
{Coincidence and Common Fixed Point Results for Generalized $\alpha$-$\psi$ Contractive Type Mappings with Applications}
\author[Priya Shahi, Jatinderdeep Kaur, S. S. Bhatia]{Priya Shahi$^{\ast}$, Jatinderdeep Kaur, S. S. Bhatia}
 \thanks{$^{\ast}$Corresponding author: Priya Shahi, School of Mathematics and Computer Applications, Thapar University, Patiala 147004, Punjab, India. Email address:
 {\rm priya.thaparian@gmail.com}}
\maketitle
\begin{center}
{\footnotesize School of Mathematics and Computer Applications,\\Thapar University, Patiala-147004, Punjab, India.\\
Email addresses: priya.thaparian@gmail.com$^{\ast}$, jkaur@thapar.edu, ssbhatia@thapar.edu
\\}
\end{center}

\begin{abstract}
\noindent A new, simple and unified approach in the theory of contractive mappings was recently given by Samet \emph{et al.} (Nonlinear Anal. 75, 2012, 2154-2165) by using the concepts of $\alpha$-$\psi$-contractive type mappings and $\alpha$-admissible mappings in metric spaces. The purpose of this paper is to present a new class of contractive pair of mappings called generalized $\alpha$-$\psi$ contractive pair of mappings and study various fixed point theorems for such mappings in complete metric spaces. For this, we introduce a new notion of $\alpha$-admissible w.r.t $g$ mapping  which in turn generalizes the concept of $g$-monotone mapping recently introduced by $\acute{C}$iri$\acute{c}$ et al. (Fixed Point Theory Appl. 2008(2008), Article ID 131294, 11 pages). As an application of our main results, we further establish common fixed point theorems for metric spaces endowed with a partial order as well as in respect of cyclic contractive mappings. The presented theorems extend and subsumes various known comparable results from the current literature. Some illustrative examples are provided to demonstrate the main results and to show the genuineness of our results.\\\\
%\vspace{0.2cm}
\noindent{\bf Keywords:} Common fixed point; Contractive type mapping; Partial order; Cyclic mappings.\\
\noindent{\bf Mathematics Subject Classification (2000)}: 54H25, 47H10, 54E50.
\end{abstract}

%{\footnotesize {\noindent\bf Keywords:} Quartet fixed point, Metric space, Partial order, Quartet coincidence point, Mixed monotone property.}

\section{Introduction}
Fixed point theory has fascinated many researchers since 1922 with the celebrated Banach fixed point theorem. There exists a vast literature on the topic and this is a very active field of research at present. Fixed point theorems are very important tools for proving the existence and uniqueness of the solutions to various mathematical models (integral and partial differential equations, variational inequalities etc). It is well known that the contractive-type conditions are very indispensable in the study of fixed point theory. The first important result on fixed points for contractive-type mappings was the well-known Banach-Caccioppoli theorem which was published in 1922 in \cite{Banach} and it also appears in \cite{Caccioppoli}. Later in 1968, Kannan \cite{Kannan} studied a new type of contractive mapping. Since then, there have been many results related to mappings satisfying various types of contractive inequality, we refer to (\cite{BhaskarandLakshmikantham}, \cite{Branciari}, \cite{Lakshmikanthamandciric}, \cite{Nieto5}, \cite{Saadati} etc) and references therein.\\
%Fixed point theory is an important tool in the study of nonlinear analysis as it is considered to be the key connection between pure and applied mathematics with wide applications in physical sciences, economics and in almost all engineering fields. The Banach contraction principle is a very popular tool which is used to solve existence problems in many branches of mathematical analysis and its applications. Many authors have extended, generalized and improved this fundamental theorem in different directions by either modifying the basic contractive condition or by changing the ambiental space.\\
\hspace*{0.5cm}Recently, Samet \emph{et al.} \cite{SametVetro} introduced a new category of contractive type mappings known as $\alpha$-$\psi$ contractive type mapping. The results obtained by Samet \emph{et al.} \cite{SametVetro} extended and generalized the existing fixed point results in the literature, in particular the Banach contraction principle. Further, Karapinar and Samet \cite{KarapinarandSamet} generalized the $\alpha$-$\psi$-contractive type mappings and obtained various fixed point theorems for this generalized class of contractive mappings.\\
\hspace*{0.5cm}The study related to common fixed points of mappings satisfying certain contractive conditions has been at the center of vigorous research activity.  In this paper, some coincidence and common fixed point theorems are obtained for the generalized $\alpha$-$\psi$ contractive pair of mappings. Our results unify and generalize the results derived by Karapinar and Samet \cite{KarapinarandSamet}, Samet \emph{et al.} \cite{SametVetro}, $\acute{C}$iri$\acute{c}$ et al. \cite{CiricMonotone}  and various other related results in the literature. Moreover, from our main results, we will derive various common fixed point results for metric spaces endowed with a partial order and that for cyclic contractive mappings. The presented results extend and generalize numerous related results in the literature.

\section{Preliminaries}
\indent First we introduce some notations and definitions that will be used subsequently.

\begin{defn}(See \cite{SametVetro}).
Let $\Psi$ be the family of functions $\psi : [0, \infty) \rightarrow [0, \infty)$ satisfying the following conditions:\\
(i) $\psi$ is nondecreasing.\\
(ii) $\displaystyle \sum_{n=1}^{+\infty} \psi^{n}(t) < \infty$ for all $t > 0$, where $\psi^{n}$ is the $n^{th}$ iterate of $\psi$.\\
\end{defn}

These functions are known as (c)-comparison functions in the literature. It can be easily verified that if $\psi$ is a (c)-comparison function, then $\psi(t) < t$ for any $t > 0$.\\

\noindent Recently, Samet \emph{et al.} \cite{SametVetro} introduced the following new notions of $\alpha$-$\psi$-contractive type mappings and $\alpha$-admissible mappings:
\begin{defn}
Let $(X, d)$ be a metric space and $T : X \rightarrow X$ be a given self mapping. $T$ is said to be an $\alpha$-$\psi$-contractive mapping if there exists two functions $\alpha : X \times X \rightarrow [0, +\infty)$ and $\psi \in \Psi$ such that
\begin{center}
$\alpha(x, y)d(Tx, Ty) \leq \psi(d(x, y))$
\end{center}
for all $x, y \in X$.
\end{defn}

\begin{defn}
Let $T : X \rightarrow X$ and $\alpha : X \times X \rightarrow [0, +\infty)$. $T$ is said to be $\alpha$-admissible if
\begin{center}
$x, y \in X$, $\alpha(x, y) \geq 1 \Rightarrow \alpha(Tx, Ty) \geq 1$.
\end{center}
\end{defn}

The following fixed point theorems are the main results in \cite{SametVetro}:

\begin{thm}\label{1.1}
Let $(X, d)$ be a complete metric space and $T : X \rightarrow X$ be an $\alpha$-$\psi$-contractive mapping satisfying the following conditions: \\
(i) $T$ is $\alpha$-admissible;\\
(ii) there exists $x_{0} \in X$ such that $\alpha(x_{0}, Tx_{0}) \geq 1$;\\
(iii) $T$ is continuous.\\
\noindent Then, $T$ has a fixed point, that is, there exists $x^{*} \in X$ such that $Tx^{*} = x^{*}$.
\end{thm}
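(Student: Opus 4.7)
The plan is to prove Theorem \ref{1.1} by constructing a Picard iterate sequence $\{x_n\}$ from the seed point $x_0$ in hypothesis (ii), showing the $\alpha$-admissibility propagates along the orbit, and then invoking the $\alpha$-$\psi$ contraction on consecutive terms to obtain a Cauchy sequence whose limit $T$-fixes by continuity.

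First I would set $x_{n+1}=Tx_n$ for $n\geq 0$, and note that if at some stage $x_n = x_{n+1}$ then $x_n$ is a fixed point and we are done. So assume $d(x_n,x_{n+1})>0$ throughout. Next I would prove by induction on $n$ that $\alpha(x_n,x_{n+1})\geq 1$: the base case is hypothesis (ii), and the inductive step uses $\alpha$-admissibility, since $\alpha(x_n,x_{n+1})\geq 1$ implies $\alpha(Tx_n,Tx_{n+1})=\alpha(x_{n+1},x_{n+2})\geq 1$.

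Then I would feed $(x_{n-1},x_n)$ into the contractive inequality to obtain
\begin{equation*}
d(x_n,x_{n+1})=d(Tx_{n-1},Tx_n)\leq \alpha(x_{n-1},x_n)\,d(Tx_{n-1},Tx_n)\leq \psi(d(x_{n-1},x_n)),
\end{equation*}
and iterate the monotonicity of $\psi$ to get $d(x_n,x_{n+1})\leq \psi^n(d(x_0,x_1))$. The core analytic step is to show $\{x_n\}$ is Cauchy: for $m>n$, the triangle inequality gives
\begin{equation*}
d(x_n,x_m)\leq \sum_{k=n}^{m-1}d(x_k,x_{k+1})\leq \sum_{k=n}^{m-1}\psi^k(d(x_0,x_1)),
\end{equation*}
and the tail of the convergent series $\sum_k \psi^k(d(x_0,x_1))$ (guaranteed by the definition of $\Psi$) tends to $0$ as $n\to\infty$. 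Completeness of $(X,d)$ yields $x_n\to x^*\in X$.

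Finally, continuity of $T$ passes to the limit in $x_{n+1}=Tx_n$, giving $Tx^*=x^*$. The main obstacle is the Cauchy argument: it is not a mere geometric-series telescoping because $\psi$ need not be a constant multiple, so one must rely essentially on the summability clause (ii) in the definition of $\Psi$ rather than on $\psi(t)<t$ alone. Everything else (induction, monotonicity, continuity passage) is routine once that summability is in place.
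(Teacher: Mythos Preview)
Your proof is correct and is precisely the standard argument for this result. Note, however, that the paper does not supply its own proof of Theorem~\ref{1.1}: the statement is quoted in the Preliminaries section as one of the main results of Samet \emph{et al.}~\cite{SametVetro}, without proof. That said, the Picard-iteration scheme you outline (propagate $\alpha\geq 1$ along the orbit by admissibility, telescope via $\psi^n$, invoke summability for Cauchyness, pass to the limit) is exactly the template the paper adapts in the proof of its own Theorem~\ref{main}, so your approach is fully in line with both the original source and the paper's methodology.
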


\begin{thm}\label{1.2}
Let $(X, d)$ be a complete metric space and $T : X \rightarrow X$ be an $\alpha$-$\psi$-contractive mapping satisfying the following conditions: \\
(i) $T$ is $\alpha$-admissible;\\
(ii) there exists $x_{0} \in X$ such that $\alpha(x_{0}, Tx_{0}) \geq 1$;\\
(iii) if $\{x_{n}\}$ is a sequence in $X$ such that $\alpha(x_{n}, x_{n+1}) \geq 1$ for all $n$ and $x_{n} \rightarrow x \in X$ as $n \rightarrow + \infty$, then $\alpha(x_{n}, x) \geq 1$ for all $n$.\\
\noindent Then, $T$ has a fixed point.
\end{thm}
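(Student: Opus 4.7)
The plan is to mimic the Picard iteration argument of Theorem \ref{1.1}, but at the final limit step replace the use of continuity of $T$ by condition (iii).

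First I would take $x_{0}$ as in hypothesis (ii) and define the iterates $x_{n+1} = Tx_{n}$. If some $x_{n} = x_{n+1}$, we are done, so assume $x_{n} \neq x_{n+1}$ for all $n$. Using $\alpha$-admissibility together with the base case $\alpha(x_{0},x_{1}) = \alpha(x_{0},Tx_{0}) \geq 1$, an easy induction gives $\alpha(x_{n},x_{n+1}) \geq 1$ for every $n \geq 0$. Combining this with the $\alpha$-$\psi$-contractive inequality,
\begin{equation*}
d(x_{n+1},x_{n+2}) = d(Tx_{n},Tx_{n+1}) \leq \alpha(x_{n},x_{n+1}) d(Tx_{n},Tx_{n+1}) \leq \psi(d(x_{n},x_{n+1})),
\end{equation*}
and iterating yields $d(x_{n},x_{n+1}) \leq \psi^{n}(d(x_{0},x_{1}))$.

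Next I would show that $\{x_{n}\}$ is Cauchy. Using the triangle inequality and the fact that $\sum \psi^{n}(d(x_{0},x_{1})) < \infty$ (a defining property of $\Psi$), the tail sums $\sum_{k \geq n} \psi^{k}(d(x_{0},x_{1}))$ tend to $0$, so $d(x_{n},x_{m}) \to 0$ as $n,m \to \infty$. Completeness of $(X,d)$ then supplies a limit $x^{*} \in X$ with $x_{n} \to x^{*}$.

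The crucial step, and the only place where this proof differs from that of Theorem \ref{1.1}, is showing $Tx^{*} = x^{*}$ without continuity. Here I would invoke hypothesis (iii): since $\alpha(x_{n},x_{n+1}) \geq 1$ for every $n$ and $x_{n} \to x^{*}$, we obtain $\alpha(x_{n},x^{*}) \geq 1$ for every $n$. Then the contractive condition gives
\begin{equation*}
d(x_{n+1},Tx^{*}) = d(Tx_{n},Tx^{*}) \leq \alpha(x_{n},x^{*}) d(Tx_{n},Tx^{*}) \leq \psi(d(x_{n},x^{*})).
\end{equation*}
Using $\psi(t) < t$ for $t > 0$ (noted after the definition of $\Psi$), we have $\psi(d(x_{n},x^{*})) \leq d(x_{n},x^{*}) \to 0$, so $x_{n+1} \to Tx^{*}$. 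By uniqueness of limits in a metric space, $Tx^{*} = x^{*}$.

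The main obstacle I anticipate is precisely this last passage to the limit: without continuity of $T$ one cannot simply pass $T$ through the limit $x_{n} \to x^{*}$, and so a bound that compares $d(Tx_{n},Tx^{*})$ directly to $d(x_{n},x^{*})$ is required. Condition (iii) is designed exactly so that this bound is available; once it is, the rest is the same Cauchy-plus-completeness scheme as in Theorem \ref{1.1}.
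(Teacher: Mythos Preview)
Your proposal is correct and is exactly the standard argument. Note, however, that the paper does not itself prove Theorem~\ref{1.2}: it is quoted from \cite{SametVetro} as background. That said, your scheme is precisely the one the paper adopts in proving its own generalization, Theorem~\ref{main}, where the same Picard-type iteration, inductive propagation of $\alpha\geq 1$, $\psi^{n}$-estimate, Cauchy/completeness step, and final use of the sequential condition (there in subsequence form) appear with $g$ in place of the identity and $M(gx,gy)$ in place of $d(x,y)$; specializing $g=I$ in that proof collapses to yours.
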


Samet \emph{et al.} \cite{SametVetro} added the following condition to the hypotheses of Theorem \ref{1.1} and Theorem \ref{1.2} to assure the uniqueness of the fixed point:\\
(C): For all $x, y \in X$, there exists $z \in X$ such that $\alpha(x, z) \geq 1$ and $\alpha(y, z) \geq 1$.\\

Recently, Karapinar and Samet \cite{KarapinarandSamet} introduced the following concept of generalized $\alpha$-$\psi$-contractive type mappings:

\begin{defn}
Let $(X, d)$ be a metric space and $T : X \rightarrow X$ be a given mapping. We say that $T$ is a generalized $\alpha$-$\psi$-contractive type mapping if there exists two functions $\alpha : X \times X \rightarrow [0, \infty)$ and $\psi \in \Psi$ such that for all $x, y \in X$, we have
\begin{eqnarray*}
\displaystyle \alpha(x, y)d(Tx, Ty) \leq \psi(M(x, y)),
\end{eqnarray*}
where $\displaystyle M(x, y) = \max \left\{d(x, y), \frac{d(x, Tx) + d(y, Ty)}{2}, \frac{d(x, Ty) + d(y, Tx)}{2} \right\}$.
\end{defn}

Further, Karapinar and Samet \cite{KarapinarandSamet} established fixed point theorems for this new class of contractive mappings. Also, they obtained fixed point theorems on metric spaces endowed with a partial order and fixed point theorems for cyclic contractive mappings.

\begin{defn}\cite{AydiCone}
Let $X$ be a non-empty set, $N$ is a natural number such that $N \geq 2$ and $T_{1}, T_{2}, . . ., T_{N} : X \rightarrow X$ are given self-mappings on $X$. If $w = T_{1}x = T_{2}x = . . . = T_{N}x$ for some $x \in X$, then $x$ is called a coincidence point of $T_{1}, T_{2}, . . . , T_{N-1}$ and $T_{N}$, and $w$ is called a point of coincidence of $T_{1}, T_{2}, . . ., T_{N-1}$ and $T_{N}$. If $w = x$, then $x$ is called a common fixed point of $T_{1}, T_{2}, . . . , T_{N-1}$ and $T_{N}$.
\end{defn}

\noindent Let $f, g : X \rightarrow X$ be two mappings. We denote by $C(g, f)$ the set of coincidence points of $g$ and $f$; that is,
\begin{eqnarray*}
C(g, f) = \{z \in X : gz = fz\}
\end{eqnarray*}

\section{Main results}
We start the main section by introducing the new consepts of $\alpha$-admissible w.r.t $g$ mapping and generalized $\alpha$-$\psi$ contractive pair of mappings.

\begin{defn}
Let $f, g : X \rightarrow X$ and $\alpha : X \times X \rightarrow [0, \infty)$. We say that $f$ is $\alpha$-admissible w.r.t $g$ if for all $x, y \in X$, we have
\begin{center}
$\alpha(gx, gy) \geq 1 \Rightarrow \alpha(fx, fy) \geq 1$.
\end{center}
\end{defn}

\begin{rem}
Clearly, every $\alpha$-admissible mapping is $\alpha$-admissible w.r.t $g$ mapping when $g = I$.
\end{rem}

The following example shows that a mapping which is $\alpha$-admissible w.r.t $g$ may not be $\alpha$-admissible.

\begin{exam}
Let $X = [1, \infty)$. Define the mapping $\alpha : X \times X \rightarrow [0, \infty)$ by
\begin{center}
$\alpha(x, y) =
\left\{
     \begin{array}{ll}
       2 & \mbox{if }  x > y  \\
       \displaystyle \frac{1}{3} & otherwise
     \end{array}
\right.$
\end{center}
Also, define the mappings $f, g : X \rightarrow X$ by $f(x) = \displaystyle \frac{1}{x}$ and $g(x) = e^{-x}$ for all $x \in X$.\\
Suppose that $\alpha(x, y) \geq 1$. This implies from the definition of $\alpha$ that $x > y$ which further implies that $\displaystyle \frac{1}{x} < \frac{1}{y}$. Thus, $\alpha(fx, fy)\ngeq 1$, that is, $f$ is not $\alpha$-admissible. \\
Now, we prove that $f$ is $\alpha$-admissible w.r.t $g$. Let us suppose that $\alpha(gx, gy) \geq 1$. So,
\begin{center}
$\alpha(gx, gy) \geq 1 \Rightarrow gx > gy \Rightarrow e^{-x} > e^{-y} \Rightarrow \displaystyle \frac{1}{x} > \frac{1}{y} \Rightarrow \alpha(fx, fy) \geq 1$
\end{center}
Therefore, $f$ is $\alpha$-admissible w.r.t $g$.
\end{exam}

In what follows, we present examples of $\alpha$-admissible w.r.t $g$ mappings.
\begin{exam}
Let $X$ be the set of all non-negative real numbers. Let us define the mapping $\alpha : X \times X \rightarrow [0, +\infty)$ by
\begin{center}
$\alpha(x, y) = \left\{
\begin{array}{c l}
  1 & if \hspace*{0.2cm} x \geq y,\\
  0 & if \hspace*{0.2cm} x < y.\\
\end{array}
\right.$ \\
\end{center}
and define the mappings $f, g : X \rightarrow X$ by $f(x) = e^{x}$ and $g(x) = x^{2}$ for all $x \in X$. Thus, the mapping $f$ is $\alpha$-admissible w.r.t $g$.
\end{exam}

\begin{exam}
Let $X = [1, \infty)$. Let us define the mapping $\alpha : X \times X \rightarrow [0, +\infty)$ by
\begin{center}
$\alpha(x, y) = \left\{
\begin{array}{c l}
  3 & if \hspace*{0.2cm} x, y \in [0, 1],\\
  \frac{1}{2} & \hspace*{0.2cm} otherwise .\\
\end{array}
\right.$ \\
\end{center}
and define the mappings $f, g : X \rightarrow X$ by $f(x) = ln \left(1 + \displaystyle \frac{x}{3}\right)$ and $g(x) = \sqrt{x}$ for all $x \in X$. Thus, the mapping $f$ is $\alpha$-admissible w.r.t $g$.
\end{exam}

Next, we present the new notion of generalized $\alpha$-$\psi$ contractive pair of mappings as follows:

\begin{defn}
Let $(X, d)$ be a metric space and $f, g : X \rightarrow X$ be given mappings. We say that the pair $(f, g)$ is a generalized $\alpha$-$\psi$ contractive pair of mappings if there exists two functions $\alpha : X \times X \rightarrow [0, +\infty)$ and $\psi \in \Psi$ such that for all $x, y \in X$, we have
\begin{eqnarray}
\alpha(gx, gy)d(fx, fy) \leq \psi(M(gx, gy)),
\end{eqnarray}
where $\displaystyle M(gx, gy) = \max \left\{d(gx, gy), \frac{d(gx, fx) + d(gy, fy)}{2}, \frac{d(gx, fy) + d(gy, fx)}{2} \right\}$.
\end{defn}

Our first result is the following coincidence point theorem.
\begin{thm}\label{main}
Let $(X, d)$ be a complete metric space and $f, g : X \rightarrow X$ be such that $f(X) \subseteq g(X)$. Assume that the pair $(f, g)$ is a generalized $\alpha$-$\psi$ contractive pair of mappings and the following conditions hold:\\
(i) $f$ is $\alpha$-admissible w.r.t. $g$;\\
(ii) there exists $x_{0} \in X$ such that $\alpha(gx_{0}, fx_{0}) \geq 1$;\\
(iii) If $\{gx_{n}\}$ is a sequence in $X$ such that $\alpha(gx_{n}, gx_{n+1}) \geq 1$ for all $n$ and $gx_{n} \rightarrow gz \in g(X)$ as $n \rightarrow \infty$, then there exists a subsequence $\{gx_{n(k)}\}$ of $\{gx_{n}\}$ such that $\alpha(gx_{n(k)}, gz) \geq 1$ for all $k$.  \\
\noindent \hspace*{0.5cm}Also suppose $g(X)$ is closed. Then, $f$ and $g$ have a coincidence point.
\end{thm}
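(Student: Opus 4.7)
The strategy is to adapt the Jungck iteration to the $\alpha$-admissible-w.r.t.-$g$ framework. Starting from $x_0$ given by (ii), I use $f(X) \subseteq g(X)$ to pick $x_1$ with $gx_1 = fx_0$, and inductively $x_{n+1}$ with $gx_{n+1} = fx_n$. The $\alpha$-admissibility of $f$ w.r.t.\ $g$, combined with the seed $\alpha(gx_0, fx_0) = \alpha(gx_0, gx_1) \geq 1$, propagates along the iteration: $\alpha(gx_n, gx_{n+1}) \geq 1$ for every $n$.

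If $d_n := d(gx_n, gx_{n+1}) = 0$ for some $n$, then $gx_n = fx_n$ and we are done; so assume $d_n > 0$ for all $n$. Applying the generalized $\alpha$-$\psi$ contractive inequality at $(x_n, x_{n+1})$ gives $d_{n+1} = d(fx_n, fx_{n+1}) \leq \alpha(gx_n, gx_{n+1})\, d(fx_n, fx_{n+1}) \leq \psi(M(gx_n, gx_{n+1}))$. Since $d(gx_{n+1}, fx_n) = 0$ and $d(gx_n, gx_{n+2}) \leq d_n + d_{n+1}$, the three terms defining $M(gx_n, gx_{n+1})$ are each bounded by $\max\{d_n, d_{n+1}\}$. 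If $\max\{d_n, d_{n+1}\} = d_{n+1}$, then $d_{n+1} \leq \psi(d_{n+1}) < d_{n+1}$, a contradiction; hence $d_{n+1} \leq \psi(d_n)$ and, by induction, $d_n \leq \psi^n(d_0)$. Summability of $\sum \psi^n(d_0)$, which is built into $\psi \in \Psi$, yields via the standard telescoping estimate $d(gx_n, gx_m) \leq \sum_{i=n}^{m-1} \psi^i(d_0)$ that $\{gx_n\}$ is Cauchy.

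By completeness, $gx_n \to y$ for some $y \in X$; since $gx_n \in g(X)$ and $g(X)$ is closed, $y = gz$ for some $z \in X$. Hypothesis (iii) now furnishes a subsequence with $\alpha(gx_{n(k)}, gz) \geq 1$, whence $d(fx_{n(k)}, fz) \leq \psi(M(gx_{n(k)}, gz))$ for every $k$. Using $fx_{n(k)} = gx_{n(k)+1} \to gz$ and continuity of $d$ in each argument, every term in $M(gx_{n(k)}, gz)$ has a limit, and $M(gx_{n(k)}, gz) \to d(gz, fz)/2$.

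\textbf{Main obstacle.} The delicate step is passing to the limit in $d(fx_{n(k)}, fz) \leq \psi(M(gx_{n(k)}, gz))$ with only monotonicity of $\psi$ available. My plan is to argue by contradiction: assume $\delta := d(gz, fz) > 0$ and fix $0 < \varepsilon < \delta/2$; then for $k$ sufficiently large $M(gx_{n(k)}, gz) < \delta/2 + \varepsilon$, so monotonicity of $\psi$ gives $d(fx_{n(k)}, fz) \leq \psi(\delta/2 + \varepsilon)$. Sending $k \to \infty$ and using $fx_{n(k)} \to gz$, we obtain $\delta \leq \psi(\delta/2 + \varepsilon) < \delta/2 + \varepsilon < \delta$, which is absurd. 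Therefore $fz = gz$ and $z$ is a coincidence point, completing the proof.
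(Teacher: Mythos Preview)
Your argument is correct and follows essentially the same route as the paper: Jungck iteration, propagation of $\alpha(gx_n,gx_{n+1})\ge 1$ via $\alpha$-admissibility w.r.t.\ $g$, the estimate $M(gx_n,gx_{n+1})\le\max\{d_n,d_{n+1}\}$ to obtain $d_{n+1}\le\psi(d_n)$, summability of $\psi^n(d_0)$ for the Cauchy property, closedness of $g(X)$ for the limit $gz$, and a contradiction from $d(gz,fz)>0$ using condition (iii). Your $\varepsilon$-argument in the final step is in fact a more careful justification than the paper gives for passing to the limit through $\psi$, since $\psi$ is only assumed nondecreasing, not continuous.
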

\begin{proof}
In view of condition (ii), let $x_{0} \in X$ be such that $\alpha(gx_{0}, fx_{0}) \geq 1$. Since $f(X) \subseteq g(X)$, we can choose a point $x_{1} \in X$ such that $fx_{0} = gx_{1}$. Continuing this process having chosen $x_{1}, x_{2}, . . . , x_{n}$, we choose $x_{n+1}$ in $X$ such that
\begin{eqnarray}
fx_{n} = gx_{n+1}, n = 0, 1, 2, . . .
\end{eqnarray}
Since $f$ is $\alpha$-admissible w.r.t $g$, we have
\begin{center}
$\alpha(gx_{0}, fx_{0}) = \alpha(gx_{0}, gx_{1}) \geq 1 \Rightarrow \alpha(fx_{0}, fx_{1}) = \alpha(gx_{1}, gx_{2}) \geq 1$
\end{center}
Using mathematical induction, we get
\begin{eqnarray}
\alpha(gx_{n}, gx_{n+1}) \geq 1, \forall \hspace*{0.1cm}n = 0, 1, 2, . . .
\end{eqnarray}
If $fx_{n+1} = fx_{n}$ for some $n$, then by (2),
\begin{eqnarray*}
fx_{n+1} = gx_{n+1}, n = 0, 1, 2, . . .
\end{eqnarray*}
that is, $f$ and $g$ have a coincidence point at $x = x_{n+1}$, and so we have finished the proof. For this, we suppose that $d(fx_{n}, fx_{n+1}) > 0$ for all $n$. Applying the inequality (1) and using (3), we obtain
\begin{eqnarray}
d(fx_{n}, fx_{n+1}) &\leq& \alpha(gx_{n}, gx_{n+1})d(fx_{n}, fx_{n+1}) \nonumber \\
&\leq& \psi(M(gx_{n}, gx_{n+1}))
\end{eqnarray}
On the other hand, we have
\begin{eqnarray*}
M(gx_{n}, gx_{n+1}) &=& \max \left\{d(gx_{n}, gx_{n+1}), \displaystyle \frac {d(gx_{n}, fx_{n}) + d(gx_{n+1}, fx_{n+1})}{2}, \displaystyle \frac {d(gx_{n}, fx_{n+1}) + d(gx_{n+1}, fx_{n})}{2} \right\} \nonumber \\
&\leq& \max\{d(fx_{n-1}, fx_{n}), d(fx_{n}, fx_{n+1})\}
\end{eqnarray*}
Owing to monotonicity of the function $\psi$ and using the inequalities (2) and (4), we have for all $n \geq 1$
\begin{eqnarray}
d(fx_{n}, fx_{n+1}) &\leq& \psi(\max \left\{d(fx_{n-1}, fx_{n}), d(fx_{n}, fx_{n+1}) \right\}
\end{eqnarray}
If for some $n \geq 1$, we have $d(fx_{n-1}, fx_{n}) \leq d(fx_{n}, fx_{n+1})$, from (5), we obtain that
\begin{eqnarray*}
\hspace*{2.0cm}d(fx_{n}, fx_{n+1}) &\leq& \psi(d(fx_{n}, fx_{n+1})) < d(fx_{n}, fx_{n+1}),
\end{eqnarray*}
a contradiction.  Thus, for all $n \geq 1$, we have
\begin{eqnarray}
\max \left\{d(fx_{n-1}, fx_{n}), d(fx_{n}, fx_{n+1}) \right\} = d(fx_{n-1}, fx_{n})
\end{eqnarray}
Notice that in view of (5) and (6), we get for all $n \geq 1$ that
\begin{eqnarray}
d(fx_{n}, fx_{n+1}) &\leq& \psi(d(fx_{n-1}, fx_{n})).
\end{eqnarray}
Continuing this process inductively, we obtain
\begin{eqnarray}
d(fx_{n}, fx_{n+1}) &\leq& \psi^{n}(d(fx_{0}, fx_{1})), \hspace*{0.5cm} \forall n \geq 1.
\end{eqnarray}
From (8) and using the triangular inequality, for all $k \geq 1$, we have
\begin{eqnarray}
d(fx_{n}, fx_{n+k}) &\leq& d(fx_{n}, fx_{n+1}) + . . . + d(fx_{n+k-1}, fx_{n+k}) \nonumber \\
&\leq& \sum_{p=n}^{n+k-1} \psi^{p}(d(fx_{1}, fx_{0})) \nonumber \\
&\leq& \sum_{p=n}^{+\infty} \psi^{p}(d(fx_{1}, fx_{0}))
\end{eqnarray}
Letting $p \rightarrow \infty$ in (9), we obtain that $\{fx_{n}\}$ is a Cauchy sequence in $(X, d)$. Since by (2) we have $\{fx_{n}\} = \{gx_{n+1}\} \subseteq g(X)$ and $g(X)$ is closed, there exists $z \in X$ such that
\begin{eqnarray}
\displaystyle \lim_{n \rightarrow \infty} gx_{n} = gz.
\end{eqnarray}
Now, we show that $z$ is a coincidence point of $f$ and $g$. On contrary, assume that $d(fz, gz) > 0$. Since by condition (iii) and (10), we have $\alpha(gx_{n(k)}, gz) \geq 1$ for all $k$, then by the use of triangle inequality and (1) we obtain
\begin{eqnarray}\nonumber
d(gz, fz) &\leq& d(gz, fx_{n(k)}) + d(fx_{n(k)}, fz) \\ \nonumber
&\leq& d(gz, fx_{n(k)}) + \alpha(gx_{n(k)}, gz)d(fx_{n(k)}, fz) \\
&\leq& d(gz, fx_{n(k)}) + \psi(M(gx_{n(k)}, gz)
\end{eqnarray}
On the other hand, we have
\begin{eqnarray*}
M(gx_{n(k)}, gz) &=& \max\left\{d(gx_{n(k)}, gz), \frac{d(gx_{n(k)}, fx_{n(k)}) + d(gz, fz)}{2}, \frac{d(gx_{n(k)}, fz) + d(gz, fx_{n(k)})}{2} \right\}
\end{eqnarray*}
Owing to above equality, we get from (11),
\begin{eqnarray}\nonumber
d(gz, fz) &\leq& d(gz, fx_{n(k)}) + \psi(M(gx_{n(k)}, gz) \\ \nonumber
&\leq& d(gz, fx_{n(k)}) + \\&& \nonumber
\psi \left(\max\left\{d(gx_{n(k)}, gz), \frac{d(gx_{n(k)}, fx_{n(k)}) + d(gz, fz)}{2}, \frac{d(gx_{n(k)}, fz) + d(gz, fx_{n(k)})}{2} \right\} \right)
\end{eqnarray}
Letting $k \rightarrow \infty$ in the above inequality yields $\displaystyle d(gz, fz) \leq \psi\left(\frac{d(fz, gz)}{2}\right) < \frac{d(fz, gz)}{2}$, which is a contradiction. Hence, our supposition is wrong and $d(fz, gz) = 0$, that is, $fz = gz$. This shows that $f$ and $g$ have a coincidence point.
%Suppose now that $f$ and $g$ commute at $z$. Set $w = gz = fz$. Then
%\begin{eqnarray}
%f(w) = f(g(z)) = g(f(z)) = g(w).
%\end{eqnarray}
%Since from condition (iii), we have $\alpha(gz, g(gz)) = \alpha(gz, gw) \geq 1$ and $gz = fz$ and $gw = fw$, from (1), we have
%\begin{eqnarray}\nonumber
%d(fz, fw) &\leq& \alpha(gz, gw)d(fz, fw) \\ \nonumber
%&\leq&
%\psi(M(gz, gw)) \\
%\nonumber
%&=& \psi \left(\max \left\{d(gz, gw), \frac{d(gz, fz) + d(gw, fw)}{2}, \frac{d(gz, fw) + d(gw, fz)}{2} \right\}\right) \\
%&=& \psi(d(fz, fw)) < d(fz, fw)
%\end{eqnarray}
%Hence $d(fz, fw) = 0$, that is, $d(w, fw) = 0$. Therefore,
%\begin{eqnarray}
%f(w) = g(w) = w.
%\end{eqnarray}
%Thus, we proved that $f$ and $g$ have a common fixed point.
\end{proof}

The next theorem shows that under additional hypotheses we can deduce the existence and uniqueness of a common fixed point.

\begin{thm}\label{main2}
In addition to the hypotheses of Theorem \ref{main}, suppose that for all $u, v \in C(g, f)$, there exists $w \in X$ such that $\alpha(gu, gw) \geq 1$ and $\alpha(gv, gw) \geq 1$ and $f, g$ commute at their coincidence points. Then $f$ and $g$ have a unique common fixed point.
\end{thm}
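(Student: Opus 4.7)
The plan is to bootstrap Theorem \ref{main} (which already produces a coincidence point $u \in C(g,f)$) by (a) upgrading the extra hypothesis into uniqueness of the \emph{point of coincidence} $gu$, and then (b) using the commutativity of $f,g$ at coincidence points to turn that point of coincidence into the unique common fixed point.

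For step (a), let $u, v \in C(g,f)$, so $fu = gu$ and $fv = gv$. Pick $w \in X$ with $\alpha(gu, gw) \geq 1$ and $\alpha(gv, gw) \geq 1$. Using $f(X) \subseteq g(X)$, I would inductively construct a sequence $\{w_n\} \subset X$ by $w_0 = w$ and $gw_{n+1} = fw_n$, exactly as in the proof of Theorem \ref{main}. Since $f$ is $\alpha$-admissible w.r.t.\ $g$ and $fu = gu$, the relation $\alpha(gu, gw_0) \geq 1$ propagates:
\begin{eqnarray*}
\alpha(gu, gw_0) \geq 1 \;\Longrightarrow\; \alpha(fu, fw_0) = \alpha(gu, gw_1) \geq 1,
\end{eqnarray*}
and by induction $\alpha(gu, gw_n) \geq 1$ for all $n$ (and similarly with $v$ in place of $u$). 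Setting $a_n := d(gu, gw_n)$, the contractive condition (1) gives
\begin{eqnarray*}
a_{n+1} = d(fu, fw_n) \leq \alpha(gu, gw_n)\, d(fu, fw_n) \leq \psi\bigl(M(gu, gw_n)\bigr).
\end{eqnarray*}
The three entries of $M(gu, gw_n)$ reduce, using $fu = gu$ and $fw_n = gw_{n+1}$, to $a_n$, $\tfrac{1}{2}d(gw_n, gw_{n+1}) \leq \tfrac{1}{2}(a_n + a_{n+1})$, and $\tfrac{1}{2}(a_{n+1} + a_n)$ respectively. If $a_{n+1} > a_n$ for some $n$, all three entries are strictly less than $a_{n+1}$ so $\psi(M) < a_{n+1}$, contradicting the inequality. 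Hence $a_{n+1} \leq a_n$ and the first entry dominates, yielding $a_{n+1} \leq \psi(a_n) \leq \psi^n(a_1)$. Since $\psi \in \Psi$ forces $\psi^n(a_1) \to 0$, we get $gw_n \to gu$. By the symmetric argument $gw_n \to gv$, and uniqueness of limits gives $gu = gv$.

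Step (b) is short. Take any $u \in C(g,f)$ and set $z := gu = fu$. Commutativity of $f$ and $g$ at $u$ yields $fz = f(gu) = g(fu) = gz$, so $z$ itself is a coincidence point. By step (a), its point of coincidence agrees with that of $u$, i.e.\ $gz = gu = z$, and therefore $fz = gz = z$, so $z$ is a common fixed point. If $z'$ is any common fixed point, then $z' \in C(g,f)$ with point of coincidence $z'$, and step (a) again forces $z' = z$.

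The main obstacle is step (a): correctly computing $M(gu, gw_n)$ and ruling out the case $a_{n+1} > a_n$, so that the iterates can be dominated by $\psi^n$. Once that is in hand, step (b) is a one-line consequence of commutativity and uniqueness of the point of coincidence.
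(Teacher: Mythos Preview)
Your proposal is correct and follows essentially the same three-step structure as the paper's proof: first prove uniqueness of the point of coincidence via the auxiliary sequence $\{w_n\}$ with $gw_{n+1}=fw_n$, then use commutativity at coincidence points to produce a common fixed point, then deduce uniqueness. Your analysis of $M(gu,gw_n)$ and the contradiction ruling out $a_{n+1}>a_n$ match the paper's argument; the only cosmetic difference is that the paper bounds $M(gu,gw_n)\le\max\{a_n,a_{n+1}\}$ directly and separately disposes of the trivial case $a_n=0$, which you should also mention to make the use of $\psi(t)<t$ airtight.
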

\begin{proof}
We need to consider three steps:\\
Step 1. We claim that if $u, v \in C(g, f)$, then $gu = gv$. By hypotheses, there exists $w \in X$ such that
\begin{eqnarray}
\alpha(gu, gw) \geq 1, \alpha(gv, gw) \geq 1
\end{eqnarray}
Due to the fact that $f(X) \subseteq g(X)$, let us define the sequence $\{w_{n}\}$ in $X$ by $gw_{n+1} = fw_{n}$ for all $n \geq 0$ and $w_{0} = w$. Since $f$ is $\alpha$-admissible w.r.t $g$, we have from (12) that
\begin{eqnarray}
\alpha(gu, gw_{n}) \geq 1, \alpha(gv, gw_{n}) \geq 1
\end{eqnarray}
for all $n \geq 0$. Applying inequality (1) and using (13), we obtain
\begin{eqnarray}
d(gu, gw_{n+1}) &=& d(fu, fw_{n}) \nonumber\\
&\leq& \alpha(gu, gw_{n})d(fu, fw_{n}) \nonumber\\
&\leq& \psi(M(gu, gw_{n}))
\end{eqnarray}
On the other hand, we have
\begin{eqnarray}
M(gu, gw_{n}) &=& \max \left\{d(gu, gw_{n}), \frac{d(gu, fu) + d(gw_{n}, fw_{n})}{2}, \frac{d(gu, fw_{n}) + d(gw_{n}, fu)}{2} \right\} \nonumber \\
&\leq& \max \left\{d(gu, gw_{n}), d(gu, gw_{n+1}) \right\}
\end{eqnarray}
Using the above inequality, (14) and owing to the monotone property of $\psi$, we get that
\begin{eqnarray}
d(gu, gw_{n+1}) &\leq& \psi(\max \left\{d(gu, gw_{n}), d(gu, gw_{n+1}) \right\})
\end{eqnarray}
for all $n$. Without restriction to the generality, we can suppose that $d(gu, gw_{n}) > 0$ for all $n$. If $\max\{d(gu, gw_{n}), d(gu, gw_{n+1})\} = d(gu, gw_{n+1})$, we have from (16) that
\begin{eqnarray}
d(gu, gw_{n+1}) \leq \psi(d(gu, gw_{n+1})) < d(gu, gw_{n+1})
\end{eqnarray}
which is a contradiction. Thus, we have $\max \{d(gu, gw_{n}), d(gu, gw_{n+1}) \} = d(gu, gw_{n})$, and $d(gu, gw_{n+1}) \leq \psi(d(gu, gw_{n}))$ for all $n$. This implies that
\begin{eqnarray}
d(gu, gw_{n}) \leq \psi^{n}(d(gu, gw_{0})), \hspace*{0.1cm} \forall n \geq 1
\end{eqnarray}
Letting $n \rightarrow \infty$ in the above inequality, we infer that
\begin{eqnarray}
\lim_{n \rightarrow \infty}d(gu, gw_{n}) = 0
\end{eqnarray}
Similarly, we can prove that
\begin{eqnarray}
\lim_{n \rightarrow \infty}d(gv, gw_{n}) = 0
\end{eqnarray}
It follows from (19) and (20) that $gu = gv$.\\
Step 2. Existence of a common fixed point:
Let $u \in C(g, f)$, that is, $gu = fu$. Owing to the commutativity of $f$ and $g$ at their coincidence points, we get
\begin{eqnarray}
g^{2}u = gfu = fgu
\end{eqnarray}
Let us denote $gu = z$, then from (21), $gz = fz$. Thus, $z$ is a coincidence point of $f$ and $g$. Now, from Step 1, we have $gu = gz = z =
fz$. Then, $z$ is a common fixed point of $f$ and $g$.\\
Step 3. Uniqueness: Assume that $z^{*}$ is another common fixed point of $f$ and $g$. Then $z^{*} \in C(g, f)$. By step 1, we have $z^{*} = gz^{*} = gz = z$. This completes the proof.
\end{proof}

In what follows, we furnish an illustrative example wherein one demonstrates Theorem \ref{main2} on the existence and uniqueness of a common fixed point.

\begin{exam}
Consider $X = [0, +\infty)$ equipped with the usual metric $d(x, y) = |x - y|$ for all $x, y \in X$. Define the mappings $f : X \rightarrow X$ and $g : X \rightarrow X$ by
\begin{center}
$f(x) = \left\{
\begin{array}{c l}
  \displaystyle 2x - \displaystyle \frac{3}{2} & if \hspace*{0.1cm} x > 2 ,\\
  \displaystyle \frac{x}{3}                    & if \hspace*{0.1cm} 0 \leq x \leq 2.\\
\end{array}
\right.$ \\
\end{center}
and
\begin{center}
$g(x) = \displaystyle \frac{x}{2} \hspace*{0.1cm} \forall \hspace*{0.01cm} x \in X$.
\end{center}
Now, we define the mapping $\alpha : X \times X \rightarrow [0, +\infty)$ by
\begin{center}
$\alpha(x, y) = \left\{
\begin{array}{c l}
  1 & if \hspace*{0.1cm}(x, y) \in [0, 1] ,\\
  0 & otherwise.\\
\end{array}
\right.$ \\
\end{center}
Clearly, the pair $(f, g)$ is a generalized $\alpha$-$\psi$ contractive pair of mappings with $\psi(t) = \displaystyle \frac{4}{5}t$ for all $t \geq 0$. In fact, for all $x, y \in X$, we have
\begin{eqnarray*}
\alpha(gx, gy). d(fx, fy) = 1. \left |\displaystyle \frac{x}{3} - \displaystyle \frac{y}{3} \right| &\leq& \frac{4}{5} \left |\displaystyle \frac{x}{2} - \displaystyle \frac{y}{2} \right| \\
&=& \displaystyle \frac{4}{5} d(gx, gy) \\
&\leq& \displaystyle \frac{4}{5} M(gx, gy) = \psi(M(gx, gy))
\end{eqnarray*}
Moreover, there exists $x_{0} \in X$ such that $\alpha(gx_{0}, fx_{0}) \geq 1$. Infact, for $x_{0} = 1$, we have $\alpha \left(\displaystyle \frac{1}{2}, \displaystyle \frac{1}{3} \right) = 1$.\\
\noindent Now, it remains to show that $f$ is $\alpha$-admissible w.r.t $g$. In so doing, let $x, y \in X$ such that $\alpha(gx, gy) \geq 1$. This implies that $gx, gy \in [0, 1]$ and by the definition of $g$, we have $x, y \in [0, 2]$. Therefore, by the definition of $f$ and $\alpha$, we have
\begin{center}
$f(x) = \displaystyle \frac{x}{3} \in [0, 1]$, $f(y) = \displaystyle \frac{y}{3} \in [0, 1]$ and $\alpha(fx, fy) = 1$.
\end{center}
Thus, $f$ is $\alpha$-admissible w.r.t $g$. Clearly, $f(X) \subseteq g(X)$ and $g(X)$ is closed. \\
\noindent Finally, let $\{gx_{n}\}$ be a sequence in $X$ such that $\alpha(gx_{n}, gx_{n+1}) \geq 1$ for all $n$ and $gx_{n} \rightarrow gz \in g(X)$ as $n \rightarrow +\infty$. Since $\alpha(gx_{n}, gx_{n+1}) \geq 1$ for all $n$, by the definition of $\alpha$, we have $gx_{n} \in [0, 1]$ for all $n$ and $gz \in [0, 1]$. Then, $\alpha(gx_{n}, gz) \geq 1$.
Now, all the hypotheses of Theorem \ref{main} are satisfied. Consequently, $f$ and $g$ have a coincidence point. Here, 0 is a coincidence point of $f$ and $g$. Also, clearly all the hypotheses of Theorem \ref{main2} are satisfied. In this example, 0 is the unique common fixed point of $f$ and $g$.
\end{exam}

\section{Consequences}
In this section, we will show that many existing results in the literature can be obtained easily from our Theorem \ref{main2}.

\subsection{Standard Fixed Point Theorems}
By taking $\alpha(x, y) = 1$ for all $x, y \in X$ in Theorem \ref{main2}, we obtain immediately the following fixed point theorem.
\begin{cor}\label{cons1}
Let $(X, d)$ be a complete metric space and $f, g : X \rightarrow X$ be such that $f(X) \subseteq g(X)$. Suppose that there exists a function $\psi \in \Psi$ such that
\begin{eqnarray}
d(fx, fy) \leq \psi(M(gx, gy)),
\end{eqnarray}
for all $x, y \in X$.  Also, suppose $g(X)$ is closed. Then, $f$ and $g$ have a coincidence point. Further, if $f$, $g$ commute at their coincidence points, then $f$ and $g$ have a common fixed point.
\end{cor}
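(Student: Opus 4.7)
The plan is to derive this corollary as an immediate specialization of Theorem \ref{main2}, obtained by taking the function $\alpha : X \times X \rightarrow [0, +\infty)$ to be identically equal to $1$. First I would observe that with this constant choice, the generalized $\alpha$-$\psi$ contractive inequality $\alpha(gx, gy)\, d(fx, fy) \leq \psi(M(gx, gy))$ collapses exactly onto the hypothesis $d(fx, fy) \leq \psi(M(gx, gy))$ assumed in the corollary, so the pair $(f, g)$ qualifies as a generalized $\alpha$-$\psi$ contractive pair with this $\alpha$ and the prescribed $\psi \in \Psi$.

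Next I would check that every structural hypothesis of Theorem \ref{main} is vacuously satisfied. Condition (i), that $f$ is $\alpha$-admissible w.r.t.\ $g$, is immediate since $\alpha(fx, fy) = 1 \geq 1$ for any $x, y \in X$. Condition (ii) holds by picking any $x_{0} \in X$, because $\alpha(gx_{0}, fx_{0}) = 1 \geq 1$. Condition (iii) also follows because, for every convergent sequence $\{gx_{n}\}$ with limit $gz \in g(X)$, we trivially have $\alpha(gx_{n}, gz) = 1 \geq 1$ (so one may take the full sequence as the required subsequence). Since we are also told that $f(X) \subseteq g(X)$ and that $g(X)$ is closed, Theorem \ref{main} produces a coincidence point $z \in X$ with $fz = gz$, yielding the first assertion of the corollary.

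For the second assertion, I would verify the extra hypothesis needed in Theorem \ref{main2}: given any $u, v \in C(g, f)$, choose $w = u$ (or any element of $X$); then $\alpha(gu, gw) = \alpha(gv, gw) = 1$, so the required $w$ exists. Combined with the assumed commutativity of $f$ and $g$ at their coincidence points, Theorem \ref{main2} yields a unique common fixed point.

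There is essentially no technical obstacle here — the entire content of the argument is the routine verification that the constant function $\alpha \equiv 1$ makes each $\alpha$-dependent hypothesis of Theorems \ref{main} and \ref{main2} trivially true. The only mildly delicate point to flag is that the corollary's statement does not make uniqueness explicit for the common fixed point, but the full Theorem \ref{main2} gives uniqueness for free under the constant $\alpha$.
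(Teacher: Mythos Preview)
Your proposal is correct and matches the paper's approach exactly: the paper simply states that Corollary \ref{cons1} follows by taking $\alpha(x, y) = 1$ for all $x, y \in X$ in Theorem \ref{main2}, and your verification spells out precisely why this specialization works.
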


By taking $g = I$ in Corollary \ref{cons1}, we obtain immediately the following fixed point theorem.
\begin{cor}(see Karapinar and Samet \cite{KarapinarandSamet}).\label{cons2}
Let $(X, d)$ be a complete metric space and $f : X \rightarrow X$. Suppose that there exists a function $\psi \in \Psi$ such that
\begin{eqnarray}
d(fx, fy) \leq \psi(M(x, y)),
\end{eqnarray}
for all $x, y \in X$. Then $f$ has a unique fixed point.
\end{cor}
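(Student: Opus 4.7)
The plan is to obtain this result as an immediate specialization of Theorem \ref{main2} by taking $g = I$, the identity map on $X$, and $\alpha(x,y) \equiv 1$ on $X \times X$. Under these choices, the hypothesis $d(fx, fy) \leq \psi(M(x,y))$ becomes exactly the contractive inequality (1) for the pair $(f, I)$, since $\alpha(gx, gy) d(fx, fy) = d(fx, fy)$ and $M(gx, gy) = M(x,y)$. Hence $(f, I)$ is a generalized $\alpha$-$\psi$ contractive pair of mappings.

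Next, I would verify each remaining hypothesis of Theorem \ref{main2} in turn and note that all of them are satisfied trivially with these choices. The inclusion $f(X) \subseteq g(X) = X$ is automatic, and $g(X) = X$ is closed. The map $f$ is $\alpha$-admissible w.r.t.\ $g$ because the conclusion $\alpha(fx, fy) = 1 \geq 1$ holds without any assumption. Condition (ii) holds for every $x_0 \in X$ since $\alpha(gx_0, fx_0) = 1$. Condition (iii) holds for every convergent sequence, since $\alpha(gx_{n(k)}, gz) = 1 \geq 1$ for every $k$. The additional assumption required for Theorem \ref{main2} is also trivial: given any $u, v \in C(I, f)$, any point $w \in X$ fulfills $\alpha(gu, gw) = \alpha(gv, gw) = 1 \geq 1$. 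Finally, $f$ commutes with $g = I$ at every point of $X$, and in particular at every coincidence point.

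A coincidence point of $f$ and $I$ is precisely a fixed point of $f$, and a common fixed point of $f$ and $I$ is again a fixed point of $f$. Consequently, Theorem \ref{main2} yields a unique common fixed point of $f$ and $I$, which is the unique fixed point of $f$, as claimed. There is essentially no obstacle in this argument; the entire content of the corollary lies in observing that the hypothesis $d(fx, fy) \leq \psi(M(x,y))$ is exactly what one obtains from the generalized $\alpha$-$\psi$ contractive condition when $\alpha \equiv 1$ and $g = I$, so every hypothesis of the parent theorem collapses to a triviality.
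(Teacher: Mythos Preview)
Your proposal is correct and follows essentially the same route as the paper: the paper derives Corollary~\ref{cons2} by first setting $\alpha \equiv 1$ in Theorem~\ref{main2} to obtain Corollary~\ref{cons1}, and then taking $g = I$, which is precisely the specialization you carry out in a single step. Your verification that all hypotheses of Theorem~\ref{main2} become trivial under these choices is accurate.
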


The following fixed point theorems can be easily obtained from Corollaries \ref{cons1} and \ref{cons2}.
\begin{cor}
Let $(X, d)$ be a complete metric space and $f, g : X \rightarrow X$ be such that $f(X) \subseteq g(X)$. Suppose that there exists a function $\psi \in \Psi$ such that
\begin{eqnarray}
d(fx, fy) \leq \psi(d(gx, gy)),
\end{eqnarray}
for all $x, y \in X$.  Also, suppose $g(X)$ is closed. Then, $f$ and $g$ have a coincidence point. Further, if $f$, $g$ commute at their coincidence points, then $f$ and $g$ have a common fixed point.
\end{cor}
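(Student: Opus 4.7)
The plan is to reduce this corollary directly to Corollary \ref{cons1}, which has already been obtained as a specialization of Theorem \ref{main2}. The only work is to show that the stronger hypothesis $d(fx,fy) \le \psi(d(gx,gy))$ implies the hypothesis $d(fx,fy) \le \psi(M(gx,gy))$ required by Corollary \ref{cons1}; everything else is then automatic.

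First I would recall the definition
\[
M(gx,gy) = \max\left\{d(gx,gy), \frac{d(gx,fx)+d(gy,fy)}{2}, \frac{d(gx,fy)+d(gy,fx)}{2}\right\},
\]
and observe the trivial inequality $d(gx,gy) \le M(gx,gy)$ for all $x,y \in X$. Next, since $\psi \in \Psi$ is nondecreasing by definition, monotonicity yields $\psi(d(gx,gy)) \le \psi(M(gx,gy))$. Combining this with the hypothesis $d(fx,fy) \le \psi(d(gx,gy))$ gives
\[
d(fx,fy) \le \psi(d(gx,gy)) \le \psi(M(gx,gy))
\]
for all $x,y \in X$, which is exactly the contractive condition of Corollary \ref{cons1}.

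Since $f(X) \subseteq g(X)$ and $g(X)$ is closed by assumption, Corollary \ref{cons1} applies and guarantees that $f$ and $g$ have a coincidence point. Moreover, the commutativity hypothesis at coincidence points is inherited from the present statement, so the common fixed point conclusion of Corollary \ref{cons1} also transfers directly. There is no real obstacle here: the proof is entirely formal, resting on the monotonicity of $\psi$ and the obvious bound $d(gx,gy) \le M(gx,gy)$. The only point worth noting is that the uniqueness condition (C) of Theorem \ref{main2} is automatic in this consequence because, taking $\alpha \equiv 1$, any $w \in X$ satisfies $\alpha(gu,gw) = \alpha(gv,gw) = 1$, so the hypotheses of Theorem \ref{main2} are met without any extra assumption.
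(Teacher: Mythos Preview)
Your proposal is correct and matches the paper's approach: the paper simply states that this corollary ``can be easily obtained from Corollaries \ref{cons1} and \ref{cons2},'' and your reduction to Corollary \ref{cons1} via the monotonicity of $\psi$ and the bound $d(gx,gy)\le M(gx,gy)$ is exactly the intended argument. Your closing remark about the uniqueness condition being automatic when $\alpha\equiv 1$ is accurate, though note that the present corollary only asserts existence (not uniqueness) of the common fixed point.
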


\begin{cor}(Berinde \cite{Berinde}).
Let $(X, d)$ be a complete metric space and $f : X \rightarrow X$. Suppose that there exists a function $\psi \in \Psi$ such that
\begin{eqnarray}
d(fx, fy) \leq \psi(d(x, y)),
\end{eqnarray}
for all $x, y \in X$. Then $f$ has a unique fixed point.
\end{cor}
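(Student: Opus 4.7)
The plan is to derive this Berinde-type fixed point theorem as a direct special case of one of the corollaries already established in this section, thereby avoiding any new induction, Cauchy-sequence, or limit-passing argument. Two natural choices are available: apply the preceding corollary (the one with a general $g$) with $g = I$, or apply Corollary \ref{cons2} after noting a trivial monotonicity step. The second route gives uniqueness for free, so I will take that one.

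First I would observe that from the very definition
\[
M(x,y) = \max\left\{d(x,y), \frac{d(x,fx)+d(y,fy)}{2}, \frac{d(x,fy)+d(y,fx)}{2}\right\}
\]
we always have $d(x,y) \leq M(x,y)$. Then, using the fact (part (i) of the definition of $\Psi$) that $\psi$ is nondecreasing, I would conclude $\psi(d(x,y)) \leq \psi(M(x,y))$ for all $x,y \in X$. Chaining this with the assumed Berinde contraction gives
\[
d(fx, fy) \;\leq\; \psi(d(x,y)) \;\leq\; \psi(M(x,y)) \qquad \text{for all } x,y \in X,
\]
which is precisely the hypothesis of Corollary \ref{cons2}.

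Since $(X,d)$ is complete and $f\colon X\to X$, all hypotheses of Corollary \ref{cons2} are met, so $f$ has a unique fixed point, which is exactly the conclusion of the statement.

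There is essentially no obstacle here: the entire content of the proof is the monotonicity inequality $\psi(d(x,y)) \leq \psi(M(x,y))$, and the only thing one must be careful about is invoking property (i) of the class $\Psi$ rather than property (ii). If one instead preferred to deduce the result from the preceding unnumbered corollary with $g = I$, the existence part would follow at once (since $I(X) = X$ is closed, $f(X) \subseteq X$ trivially, and $f$ commutes with $I$ everywhere), and uniqueness would require the one-line argument that two fixed points $z_1 \neq z_2$ would yield $d(z_1,z_2) = d(fz_1,fz_2) \leq \psi(d(z_1,z_2)) < d(z_1,z_2)$, a contradiction.
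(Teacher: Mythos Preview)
Your proposal is correct and follows essentially the same approach as the paper: the paper states that this corollary (along with the surrounding ones) ``can be easily obtained from Corollaries \ref{cons1} and \ref{cons2}'' without giving further details, and your argument---using $d(x,y)\leq M(x,y)$ together with the monotonicity of $\psi$ to reduce to Corollary \ref{cons2}---is precisely the intended easy deduction.
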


\begin{cor}($\acute{C}$iri$\acute{c}$ \cite{Ciric72}).
Let $(X, d)$ be a complete metric space and $T : X \rightarrow X$ be a given mapping. Suppose that there exists a constant $\lambda \in (0, 1)$
such that
\begin{eqnarray*}
d(fx, fy) \leq \lambda \max \left\{d(x, y), \frac{d(x, fx) + d(y, fy)}{2}, \frac{d(x, fy) + d(y, fx)}{2} \right\}
\end{eqnarray*}
for all $x, y \in X$. Then $T$ has a unique fixed point.
\end{cor}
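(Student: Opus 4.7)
The plan is to derive this Ćirić-type theorem as an immediate consequence of Corollary \ref{cons2}, so essentially no new argument is required beyond exhibiting the right $\psi$ and verifying that it belongs to the class $\Psi$. First, I would set $\psi(t) = \lambda t$ for all $t \geq 0$, where $\lambda \in (0,1)$ is the constant provided by the hypothesis. The contractive condition then reads $d(fx, fy) \leq \psi(M(x,y))$ with the same $M(x,y)$ appearing in the definition of a generalized $\alpha$-$\psi$-contractive type mapping, so the hypotheses of Corollary \ref{cons2} will be met once membership in $\Psi$ is confirmed.

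Next I would check that $\psi(t) = \lambda t$ lies in $\Psi$. Monotonicity is clear since $\lambda > 0$. For the summability condition, a direct computation gives $\psi^n(t) = \lambda^n t$, so
\begin{equation*}
\sum_{n=1}^{+\infty} \psi^n(t) \;=\; t \sum_{n=1}^{+\infty} \lambda^n \;=\; \frac{\lambda t}{1-\lambda} \;<\; \infty
\end{equation*}
for every $t > 0$, using $\lambda \in (0,1)$. Hence $\psi \in \Psi$.

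With these two observations in place, an invocation of Corollary \ref{cons2} (which applies because $(X,d)$ is complete and $f \colon X \to X$ satisfies $d(fx, fy) \leq \psi(M(x,y))$ for all $x, y \in X$) yields at once the existence and uniqueness of a fixed point of $f$. Note that the statement is formulated with $T$ but the contractive inequality is written in terms of $f$; treating them as the same self-map, the conclusion "$T$ has a unique fixed point" is exactly the conclusion supplied by Corollary \ref{cons2}. There is no genuine obstacle in this proof: the only thing that could fail is the summability of $\sum \psi^n(t)$, and the geometric-series calculation above handles it in one line.
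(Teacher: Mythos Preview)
Your proposal is correct and follows exactly the approach the paper intends: the paper lists this corollary among those that ``can be easily obtained from Corollaries \ref{cons1} and \ref{cons2}'' without giving a separate proof, and your choice $\psi(t)=\lambda t$ together with the geometric-series verification that $\psi\in\Psi$ is precisely the one-line reduction to Corollary \ref{cons2} that is implicit there.
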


\begin{cor}(Hardy and Rogers \cite{Hardy})
Let $(X, d)$ be a complete metric space and $T : X \rightarrow X$ be a given mapping. Suppose that there exists constants $A, B, C \geq 0$ with $(A + 2B + 2C) \in (0, 1)$ such that
\begin{eqnarray*}
d(fx, fy) \leq Ad(x, y) + B[d(x, fx) + d(y, fy)] + C[d(x, fy) + d(y, fx)],
\end{eqnarray*}
for all $x, y \in X$. Then $T$ has a unique fixed point.
\end{cor}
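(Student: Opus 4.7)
The plan is to reduce this Hardy--Rogers statement to Corollary \ref{cons2} (the Karapinar--Samet fixed point theorem), which is the specialization of Theorem \ref{main2} to $g = I$ and $\alpha \equiv 1$. The key idea is that the three ``averaged'' ingredients occurring on the right--hand side of the Hardy--Rogers inequality are each majorized by $M(x,y)$, so the whole right--hand side collapses into $\lambda \, M(x,y)$ for a suitable constant $\lambda \in (0,1)$, which is precisely a (c)-comparison function evaluated at $M(x,y)$.

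First I would set $\lambda := A + 2B + 2C$ and define $\psi : [0, \infty) \to [0, \infty)$ by $\psi(t) = \lambda t$. Since $\lambda \in (0,1)$ by hypothesis, $\psi$ is clearly nondecreasing, and $\sum_{n=1}^{\infty} \psi^n(t) = \sum_{n=1}^{\infty} \lambda^n t = \lambda t/(1-\lambda) < \infty$ for every $t > 0$, so $\psi \in \Psi$.

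Next I would verify that $f$ (the mapping appearing in the stated inequality) satisfies the hypothesis of Corollary \ref{cons2}. Recalling that
\[
M(x,y) = \max\left\{ d(x,y),\ \frac{d(x,fx) + d(y,fy)}{2},\ \frac{d(x,fy) + d(y,fx)}{2}\right\},
\]
each of $d(x,y)$, $\tfrac{1}{2}[d(x,fx)+d(y,fy)]$ and $\tfrac{1}{2}[d(x,fy)+d(y,fx)]$ is bounded above by $M(x,y)$. Therefore, for all $x, y \in X$,
\[
A\,d(x,y) + B[d(x,fx)+d(y,fy)] + C[d(x,fy)+d(y,fx)] \leq (A + 2B + 2C)\,M(x,y) = \psi(M(x,y)).
\]
Combining this with the hypothesis yields $d(fx, fy) \leq \psi(M(x,y))$ for all $x, y \in X$.

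Finally, an application of Corollary \ref{cons2} produces a unique fixed point of $f$, which (interpreting the $T$ in the statement as the same mapping $f$) is the desired conclusion. There is no real obstacle in this argument: the only thing to check carefully is that $\lambda t$ lies in the class $\Psi$, which follows immediately from $\lambda < 1$, and that the three summands on the right--hand side can be uniformly majorized by $M(x,y)$, which is just the definition of the maximum.
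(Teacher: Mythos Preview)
Your argument is correct and is exactly the intended reduction: the paper itself does not give a separate proof of this corollary, stating only that it follows easily from Corollaries~\ref{cons1} and~\ref{cons2}, and your derivation---majorizing each summand by $M(x,y)$ and applying Corollary~\ref{cons2} with $\psi(t)=(A+2B+2C)t$---is precisely how that reduction goes.
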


\begin{cor}(Banach Contraction Principle \cite{Banach})
Let $(X, d)$ be a complete metric space and $T : X \rightarrow X$ be a given mapping. Suppose that there exists a constant $\lambda \in (0, 1)$ such that
\begin{eqnarray*}
d(fx, fy) \leq \lambda d(x, y)
\end{eqnarray*}
for all $x, y \in X$. Then $T$ has a unique fixed point.
\end{cor}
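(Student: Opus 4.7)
The plan is to obtain the Banach Contraction Principle as an immediate specialization of the Berinde-type corollary (the one with contractive inequality $d(fx, fy) \leq \psi(d(x,y))$), which has already been established as a consequence of Corollary \ref{cons2}. The entire argument reduces to producing the correct choice of comparison function $\psi$ and checking its membership in $\Psi$; no new analytic work is needed.

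First, I would set $\psi(t) = \lambda t$ for $t \geq 0$, where $\lambda \in (0,1)$ is the Banach contraction constant. I need to verify that $\psi \in \Psi$. Nondecreasingness is immediate because $\lambda > 0$. For the summability condition, observe that $\psi^n(t) = \lambda^n t$, so $\sum_{n=1}^{\infty} \psi^n(t) = t \sum_{n=1}^{\infty} \lambda^n = \tfrac{\lambda t}{1-\lambda} < \infty$ for every $t > 0$, since $\lambda \in (0,1)$. Hence $\psi \in \Psi$, as required.

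With this $\psi$ in hand, the Banach contraction hypothesis $d(fx, fy) \leq \lambda d(x, y) = \psi(d(x,y))$ for all $x, y \in X$ is precisely the hypothesis of the Berinde corollary stated above. Applying that corollary with this $\psi$ yields the existence and uniqueness of a fixed point of $f$, completing the proof. As an alternative route, one could instead invoke the Hardy--Rogers corollary with constants $A = \lambda$ and $B = C = 0$; then $A + 2B + 2C = \lambda \in (0,1)$ and the Banach inequality matches the Hardy--Rogers inequality, again giving a unique fixed point.

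I do not anticipate any real obstacle here: the corollary is a strict specialization of results already proved in the chain of consequences, and the only genuine step is recording that linear $\psi(t) = \lambda t$ with $\lambda < 1$ lies in the class $\Psi$ of (c)-comparison functions. The proof is therefore essentially a one-line reduction, and the presentation should emphasize that Banach's principle appears as the simplest scalar case of the general framework developed in Theorem \ref{main2}.
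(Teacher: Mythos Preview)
Your proposal is correct and matches the paper's own treatment: the paper gives no separate proof for this corollary, simply listing it among the fixed point theorems ``easily obtained from Corollaries \ref{cons1} and \ref{cons2}'', which amounts exactly to your specialization $\psi(t)=\lambda t$ (and your verification that this $\psi$ lies in $\Psi$ supplies the only detail left implicit).
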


\begin{cor}(Kannan \cite{Kannan})
Let $(X, d)$ be a complete metric space and $T : X \rightarrow X$ be a given mapping. Suppose that there exists a constant $\lambda \in (0, 1/2)$ such that
\begin{eqnarray*}
d(fx, fy) \leq \lambda[d(x, fx) + d(y, fy)],
\end{eqnarray*}
for all $x, y \in X$. Then $T$ has a unique fixed point.
\end{cor}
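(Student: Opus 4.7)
The plan is to deduce Kannan's theorem as an immediate specialization of Corollary \ref{cons2} (the generalized $\alpha$-$\psi$ fixed point statement for a single mapping). The strategy rests on two observations: first, Kannan's right-hand side $\lambda[d(x,fx)+d(y,fy)]$ can be rewritten as $2\lambda\cdot\frac{d(x,fx)+d(y,fy)}{2}$, which is controlled by $M(x,y)$ because the middle entry in the $\max$ defining $M(x,y)$ is exactly $\frac{d(x,fx)+d(y,fy)}{2}$; and second, for $\lambda\in(0,1/2)$ the multiplier $2\lambda$ lies in $(0,1)$, so the linear function $\psi(t)=2\lambda t$ qualifies as a $(c)$-comparison function.

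Concretely, I would proceed as follows. First, I would define $\psi:[0,\infty)\to[0,\infty)$ by $\psi(t)=2\lambda t$ and verify $\psi\in\Psi$: monotonicity is clear, and the series $\sum_{n\ge 1}\psi^n(t)=\sum_{n\ge 1}(2\lambda)^n t$ converges as a geometric series since $2\lambda<1$. Second, I would estimate, for arbitrary $x,y\in X$,
\begin{eqnarray*}
d(fx,fy)\;\le\;\lambda\bigl[d(x,fx)+d(y,fy)\bigr]\;=\;2\lambda\cdot\frac{d(x,fx)+d(y,fy)}{2}\;\le\;2\lambda\, M(x,y)\;=\;\psi(M(x,y)),
\end{eqnarray*}
using the definition $M(x,y)=\max\{d(x,y),\frac{d(x,fx)+d(y,fy)}{2},\frac{d(x,fy)+d(y,fx)}{2}\}$ to bound the middle term by $M(x,y)$. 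This is exactly the hypothesis of Corollary \ref{cons2}, so applying it yields a unique fixed point of $f$.

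There is essentially no serious obstacle here, since the bulk of the work (construction of the Picard-type sequence, Cauchy property, limit identification, and uniqueness) has already been done in Theorems \ref{main} and \ref{main2} and transferred to the $g=I$ setting in Corollary \ref{cons2}. The only thing one needs to be careful about is the quantitative match between Kannan's constant $\lambda<1/2$ and the admissibility condition $\psi\in\Psi$: had $\lambda$ been allowed to equal $1/2$, then $\psi(t)=t$ would fail the summability requirement $\sum\psi^n(t)<\infty$, so the strict inequality $\lambda<1/2$ is what makes the reduction go through.
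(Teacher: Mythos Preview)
Your proposal is correct and matches the paper's approach: the paper states that this corollary is obtained directly from Corollary~\ref{cons2}, and the natural way to do so is precisely your choice $\psi(t)=2\lambda t$ together with the observation that $\lambda[d(x,fx)+d(y,fy)]=2\lambda\cdot\frac{d(x,fx)+d(y,fy)}{2}\le 2\lambda\,M(x,y)$.
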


\begin{cor}(Chatterjee \cite{Chatterjea})
Let $(X, d)$ be a complete metric space and $T : X \rightarrow X$ be a given mapping. Suppose that there exists a constant $\lambda \in (0, 1/2)$ such that
\begin{eqnarray*}
d(fx, fy) \leq \lambda[d(x, fy) + d(y, fx)],
\end{eqnarray*}
for all $x, y \in X$. Then $T$ has a unique fixed point.
\end{cor}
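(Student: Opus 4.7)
The plan is to reduce this corollary to Corollary \ref{cons2} (the Karapinar--Samet fixed point theorem for $\psi$-contractions on $M(x,y)$) by choosing an appropriate linear $\psi$. Note that the statement mixes the symbols $T$ and $f$; I will treat them as the same self-map of $X$.

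First, I would set $\psi(t) = 2\lambda t$ for $t \geq 0$ and verify that $\psi \in \Psi$. Since $\lambda \in (0, 1/2)$, we have $2\lambda \in (0,1)$, so $\psi$ is nondecreasing, and $\psi^n(t) = (2\lambda)^n t$ gives
\begin{equation*}
\sum_{n=1}^{\infty} \psi^n(t) \;=\; \frac{2\lambda}{1 - 2\lambda}\, t \;<\; \infty
\end{equation*}
for every $t > 0$. Thus $\psi$ is a (c)-comparison function.

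Next, I would translate the Chatterjee hypothesis into a bound in terms of $M(x,y)$. By assumption, for all $x, y \in X$,
\begin{equation*}
d(fx, fy) \;\leq\; \lambda\bigl[d(x, fy) + d(y, fx)\bigr] \;=\; 2\lambda \cdot \frac{d(x, fy) + d(y, fx)}{2}.
\end{equation*}
Since $\dfrac{d(x, fy) + d(y, fx)}{2} \leq M(x,y)$ by the very definition of $M$, monotonicity of $\psi$ yields
\begin{equation*}
d(fx, fy) \;\leq\; 2\lambda\, M(x,y) \;=\; \psi\bigl(M(x,y)\bigr).
\end{equation*}

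Finally, I would invoke Corollary \ref{cons2}: the pair $(X,d)$ is complete, $\psi \in \Psi$, and the inequality $d(fx,fy) \leq \psi(M(x,y))$ holds for all $x,y \in X$, so $f$ (i.e., $T$) has a unique fixed point. I do not anticipate any real obstacle; the only thing worth being careful about is the strict inequality $\lambda < 1/2$, which is exactly what ensures $\psi(t) = 2\lambda t$ lies in the class $\Psi$ rather than merely being a non-expansive multiplier.
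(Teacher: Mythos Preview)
Your proposal is correct and matches the paper's intended approach: the paper presents this corollary without a separate proof, stating only that it ``can be easily obtained from Corollaries \ref{cons1} and \ref{cons2},'' and your argument is precisely the natural way to carry out that reduction. The choice $\psi(t)=2\lambda t$ and the bound via the third term of $M(x,y)$ are exactly what is expected.
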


\subsection{Fixed Point Theorems on Metric Spaces Endowed with a Partial Order}
Recently, there have been enormous developments in the study of fixed point problems of contractive mappings in metric spaces endowed with a partial order. The first result in this direction was given by Turinici \cite{Turinici}, where he extended the Banach contraction principle in partially ordered sets. Some applications of Turinici's theorem to matrix equations were presented by Ran and Reurings \cite{Ran}. Later, many useful results have been obtained regarding the existence of a fixed point for contraction type mappings in partially ordered metric spaces by Bhaskar and Lakshmikantham \cite{BhaskarandLakshmikantham}, Nieto and Lopez \cite{Nieto5,Nieto7}, Agarwal et al. \cite{Agarwal}, Lakshmikantham and $\acute{C}$iri$\acute{c}$ \cite{Lakshmikanthamandciric} and Samet \cite{SametMeir} etc. In this section, we will derive various fixed point results on a metric space endowed with a partial order. For this, we require the following concepts:

\begin{defn}\cite{KarapinarandSamet}
Let $(X, \preceq)$ be a partially ordered set and $T : X \rightarrow X$ be a given mapping. We say that $T$ is nondecreasing with respect to $\preceq$ if
\begin{center}
$x, y \in X, x \preceq y \Rightarrow Tx \preceq Ty$.
\end{center}
\end{defn}

\begin{defn}\cite{KarapinarandSamet}
Let $(X, \preceq)$ be a partially ordered set. A sequence $\{x_{n}\} \subset X$ is said to be nondecreasing with respect to $\preceq$ if $x_{n} \preceq x_{n+1}$ for all $n$.
\end{defn}

\begin{defn}\cite{KarapinarandSamet}
Let $(X, \preceq)$ be a partially ordered set and $d$ be a metric on $X$. We say that $(X, \preceq, d)$ is regular if for every nondecreasing sequence $\{x_{n}\} \subset X$ such that $x_{n} \rightarrow x \in X$ as $n \rightarrow \infty$, there exists a subsequence $\{x_{n(k)}\}$ of $\{x_{n}\}$ such that $x_{n(k)} \preceq x$ for all $k$.
\end{defn}

\begin{defn}\cite{CiricMonotone}
Suppose $(X, \preceq)$ is a partially ordered set and $F, g : X \rightarrow X$ are mappings of $X$ into itself. One says $F$ is $g$-non-decreasing if for $x, y \in X$,
\begin{eqnarray}
g(x) \preceq g(y) \hspace*{0.5cm} implies \hspace*{0.5cm} F(x) \preceq F(y).
\end{eqnarray}
\end{defn}

\begin{defn}
Let $(X, \preceq)$ be a partially ordered set and $d$ be a metric on $X$. We say that $(X, \preceq, d)$ is $g$-regular where $g : X \rightarrow X$ if for every nondecreasing sequence $\{gx_{n}\} \subset X$ such that $gx_{n} \rightarrow gz \in X$ as $n \rightarrow \infty$, there exists a subsequence $\{gx_{n(k)}\}$ of $\{gx_{n}\}$ such that $gx_{n(k)} \preceq gz$ for all $k$.
\end{defn}

We have the following result.
\begin{cor}\label{partialorder}
Let $(X, \preceq)$ be a partially ordered set and $d$ be a metric on $X$ such that $(X, d)$ is complete. Assume that $f, g : X \rightarrow X$ be such that $f(X) \subseteq g(X)$ and $f$ be a $g$-non-decreasing mapping w.r.t $\preceq$. Suppose that there exists a function $\psi \in \Psi$ such that
\begin{eqnarray}
d(fx, fy) \leq \psi(M(gx, gy)),
\end{eqnarray}
for all $x, y \in X$ with $gx \preceq gy$. Suppose also that the following conditions hold:\\
(i) there exists $x_{0} \in X$ s.t $gx_{0} \preceq fx_{0}$;\\
(ii) $(X, \preceq, d)$ is $g$-regular.\\
Also suppose $g(X)$ is closed. Then, $f$ and $g$ have a coincidence point. Moreover, if for every pair $(x, y) \in C(g,f) \times C(g, f)$ there exists $z \in X$ such that $gx \preceq gz$ and $gy \preceq gz$, and if $f$ and $g$ commute at their coincidence points, then we obtain uniqueness of the common fixed point.
\end{cor}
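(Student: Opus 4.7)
The plan is to reduce Corollary \ref{partialorder} to Theorem \ref{main2} by choosing an indicator-type weight $\alpha$ tailored to the partial order. Specifically, I would define $\alpha : X \times X \to [0, +\infty)$ by
\begin{eqnarray*}
\alpha(x, y) = \left\{
\begin{array}{c l}
  1 & \text{if } x \preceq y,\\
  0 & \text{otherwise}.
\end{array}
\right.
\end{eqnarray*}
With this choice, the hypothesis $\alpha(gx, gy) \geq 1$ is equivalent to $gx \preceq gy$, so the contractive inequality (27) of the corollary immediately yields
\begin{eqnarray*}
\alpha(gx, gy)\, d(fx, fy) \leq \psi(M(gx, gy)) \quad \text{for all } x, y \in X,
\end{eqnarray*}
which is trivially true when $\alpha(gx, gy) = 0$. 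Hence the pair $(f, g)$ is a generalized $\alpha$-$\psi$ contractive pair.

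Next I would check the structural hypotheses of Theorem \ref{main2}. Since $f$ is $g$-nondecreasing with respect to $\preceq$, whenever $\alpha(gx, gy) \geq 1$ we have $gx \preceq gy$, and so $fx \preceq fy$, which gives $\alpha(fx, fy) \geq 1$; thus $f$ is $\alpha$-admissible w.r.t.\ $g$. Condition (i) of the corollary translates directly into the existence of $x_0 \in X$ with $\alpha(gx_0, fx_0) \geq 1$. For condition (iii) of Theorem \ref{main}, let $\{gx_n\}$ be a sequence with $\alpha(gx_n, gx_{n+1}) \geq 1$ for all $n$ and $gx_n \to gz \in g(X)$; then $gx_n \preceq gx_{n+1}$, so the sequence is nondecreasing, and $g$-regularity of $(X, \preceq, d)$ supplies a subsequence $\{gx_{n(k)}\}$ with $gx_{n(k)} \preceq gz$, i.e.\ $\alpha(gx_{n(k)}, gz) \geq 1$. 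Together with the assumption that $g(X)$ is closed, all hypotheses of Theorem \ref{main} are met, so $f$ and $g$ admit a coincidence point.

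For the common fixed point part, I would verify the extra assumption of Theorem \ref{main2}. Given $u, v \in C(g, f)$, the hypothesis provides $z \in X$ with $gu \preceq gz$ and $gv \preceq gz$; taking $w = z$ yields $\alpha(gu, gw) \geq 1$ and $\alpha(gv, gw) \geq 1$. Combined with commutativity of $f$ and $g$ at coincidence points, Theorem \ref{main2} delivers a unique common fixed point.

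I do not anticipate a real obstacle: the argument is a routine translation between the order-theoretic and $\alpha$-weighted frameworks. The only subtle point is making sure that the contractive inequality (27), which is assumed only for comparable pairs $gx \preceq gy$, still implies the full $\alpha$-$\psi$ inequality (1) required by Theorem \ref{main2}, which works precisely because $\alpha$ vanishes on incomparable pairs.
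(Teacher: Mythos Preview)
Your proposal is correct and follows essentially the same route as the paper: define an order-induced indicator $\alpha$ and verify the hypotheses of Theorems \ref{main} and \ref{main2}. The only cosmetic difference is that the paper takes $\alpha(x,y)=1$ whenever $x\preceq y$ \emph{or} $x\succeq y$ (a symmetric version), whereas you use the one-sided $\alpha(x,y)=1$ iff $x\preceq y$; both choices work here, and your verification of each hypothesis goes through unchanged.
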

\begin{proof}
Define the mapping $\alpha : X \times X \rightarrow [0, \infty)$ by
\begin{eqnarray}
\alpha(x, y) =
\left\{
     \begin{array}{ll}
       1 & \mbox{if }  x \preceq y \hspace*{0.2cm} or \hspace*{0.2cm} x \succeq y  \\
       0 & otherwise
     \end{array}
\right.
\end{eqnarray}
Clearly, the pair $(f, g)$ is a generalized $\alpha$-$\psi$ contractive pair of mappings, that is,
\begin{eqnarray*}
\alpha(gx, gy)d(fx, fy) \leq \psi(M(gx, gy)),
\end{eqnarray*}
for all $x, y \in X$. Notice that in view of condition (i), we have $\alpha(gx_{0}, fx_{0}) \geq 1$. Moreover, for all $x, y \in X$, from the $g$-monotone property of $f$, we have
\begin{eqnarray}
\alpha(gx, gy) \geq 1 \Rightarrow gx \hspace*{0.1cm} \preceq \hspace*{0.1cm} gy \hspace*{0.1cm} or \hspace*{0.1cm} gx \succeq gy \Rightarrow  fx \preceq fy \hspace*{0.1cm} or \hspace*{0.1cm} fx \succeq fy \Rightarrow \alpha(fx, fy) \geq 1.
\end{eqnarray}
which amounts to say that $f$ is $\alpha$-admissible w.r.t $g$. Now, let $\{gx_{n}\}$ be a sequence in $X$ such that $\alpha(gx_{n}, gx_{n+1}) \geq 1$ for all $n$ and $gx_{n} \rightarrow gz \in X$ as $n \rightarrow \infty$. From the $g$-regularity hypothesis, there exists a subsequence $\{gx_{n(k)}\}$ of $\{gx_{n}\}$ such that $gx_{n(k)} \preceq gz$ for all $k$. So, by the definition of $\alpha$, we obtain that $\alpha(gx_{n(k)}, gz) \geq 1$. Now, all the hypotheses of Theorem \ref{main} are satisfied. Hence, we deduce that $f$ and $g$ have a coincidence point $z$, that is, $fz = gz$.\\
Now, we need to show the existence and uniqueness of common fixed point. For this, let $x, y \in X$. By hypotheses, there exists $z \in X$ such that $gx \preceq gz$ and $gy \preceq gz$, which implies from the definition of $\alpha$ that $\alpha(gx, gz) \geq 1$ and $\alpha(gy, gz) \geq 1$. Thus, we deduce the existence and uniqueness of the common fixed point by Theorem \ref{main2}.
\end{proof}

The following results are immediate consequences of Corollary \ref{partialorder}.

\begin{cor}
Let $(X, \preceq)$ be a partially ordered set and $d$ be a metric on $X$ such that $(X, d)$ is complete. Assume that $f, g : X \rightarrow X$ and $f$ be a $g$-non-decreasing mapping w.r.t $\preceq$. Suppose that there exists a function $\psi \in \Psi$ such that
\begin{eqnarray}
d(fx, fy) \leq \psi(d(gx, gy)),
\end{eqnarray}
for all $x, y \in X$ with $gx \preceq gy$. Suppose also that the following conditions hold:\\
(i) there exists $x_{0} \in X$ s.t $gx_{0} \preceq fx_{0}$;\\
(ii) $(X, \preceq, d)$ is $g$-regular.\\
Also suppose $g(X)$ is closed. Then, $f$ and $g$ have a coincidence point. Moreover, if for every pair $(x, y) \in C(g,f) \times C(g, f)$ there exists $z \in X$ such that $gx \preceq gz$ and $gy \preceq gz$, and if $f$ and $g$ commute at their coincidence points, then we obtain uniqueness of the common fixed point.
\end{cor}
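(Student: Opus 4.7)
The plan is to derive this corollary as an immediate consequence of Corollary \ref{partialorder}, with the only real ingredient being monotonicity of the comparison function $\psi$. The key observation is that $d(gx, gy)$ occurs as one of the three terms in the maximum defining $M(gx, gy)$, so
\begin{equation*}
d(gx, gy) \leq M(gx, gy) \quad \text{for all } x, y \in X.
\end{equation*}
Since $\psi \in \Psi$ is nondecreasing, this upgrades the hypothesis (28) to $d(fx, fy) \leq \psi(d(gx, gy)) \leq \psi(M(gx, gy))$ whenever $gx \preceq gy$, which is precisely the $M$-based contractive hypothesis (27) required by Corollary \ref{partialorder}.

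Having established the contractive condition in $M$-form, I would then verify that every remaining assumption of Corollary \ref{partialorder} is already present in the hypotheses above: completeness of $(X, d)$, the $g$-non-decreasing property of $f$ with respect to $\preceq$, the existence of $x_0 \in X$ with $gx_0 \preceq fx_0$, the $g$-regularity of $(X, \preceq, d)$, and closedness of $g(X)$. (The inclusion $f(X) \subseteq g(X)$, needed to build the Jungck-type iteration in Theorem \ref{main}, is understood implicitly, exactly as in Corollary \ref{partialorder}.) An application of Corollary \ref{partialorder} then delivers a coincidence point of $f$ and $g$.

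For the second half of the statement, the additional hypotheses, namely commutativity of $f$ and $g$ at their coincidence points together with the existence, for each pair $(x, y) \in C(g,f) \times C(g,f)$, of $z \in X$ with $gx \preceq gz$ and $gy \preceq gz$, match exactly the uniqueness conditions used in Corollary \ref{partialorder}. The same invocation therefore yields existence and uniqueness of the common fixed point. No step is delicate: the entire proof reduces to the monotonicity inequality $\psi(d(gx, gy)) \leq \psi(M(gx, gy))$ followed by a direct citation of Corollary \ref{partialorder}, so I do not foresee any genuine obstacle.
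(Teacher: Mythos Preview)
Your proposal is correct and aligns with the paper's own treatment: the paper simply states that this result is an ``immediate consequence'' of Corollary~\ref{partialorder}, and your argument via $d(gx,gy)\le M(gx,gy)$ together with the monotonicity of $\psi$ is precisely the routine verification behind that assertion. Your parenthetical remark about the implicit assumption $f(X)\subseteq g(X)$ is also apt, since the paper silently carries it over from Corollary~\ref{partialorder}.
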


\begin{cor}
Let $(X, \preceq)$ be a partially ordered set and $d$ be a metric on $X$ such that $(X, d)$ is complete. Assume that $f, g : X \rightarrow X$ and $f$ be a $g$-non-decreasing mapping w.r.t $\preceq$. Suppose that there exists a constant $\lambda \in (0, 1)$ such that
\begin{eqnarray}
d(fx, fy) \leq \lambda \max \left\{d(gx, gy), \frac{d(gx, fx) + d(gy, fy)}{2}, \frac{d(gx, fy) + d(gy, fx)}{2} \right\},
\end{eqnarray}
for all $x, y \in X$ with $gx \preceq gy$. Suppose also that the following conditions hold:\\
(i) there exists $x_{0} \in X$ s.t $gx_{0} \preceq fx_{0}$;\\
(ii) $(X, \preceq, d)$ is $g$-regular.\\
Also suppose $g(X)$ is closed. Then, $f$ and $g$ have a coincidence point. Moreover, if for every pair $(x, y) \in C(g,f) \times C(g, f)$ there exists $z \in X$ such that $gx \preceq gz$ and $gy \preceq gz$, and if $f$ and $g$ commute at their coincidence points, then we obtain uniqueness of the common fixed point.
\end{cor}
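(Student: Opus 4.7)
The strategy is to realize this corollary as a direct specialization of the immediately preceding corollary (the $\psi$-version of the partially ordered result, obtained from Corollary \ref{partialorder} together with its chain of reductions) by choosing a particular member of the class $\Psi$. The natural choice is the linear comparison function $\psi(t) = \lambda t$ for $t \ge 0$, which reduces the generalized $\max$-type inequality in the hypothesis to exactly the form $d(fx, fy) \le \psi(M(gx, gy))$ that the previous corollary accepts.

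The key verification step is to check that $\psi(t) = \lambda t$ really belongs to $\Psi$. Nondecreasingness is immediate because $\lambda > 0$. For summability, I would compute directly that $\psi^{n}(t) = \lambda^{n} t$ and therefore
\begin{eqnarray*}
\sum_{n=1}^{+\infty} \psi^{n}(t) \;=\; t \sum_{n=1}^{+\infty} \lambda^{n} \;=\; \frac{\lambda t}{1 - \lambda} \;<\; \infty
\end{eqnarray*}
for every $t > 0$, using the assumption $\lambda \in (0,1)$. Hence $\psi \in \Psi$.

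With this $\psi$ in hand, the contractive assumption of the corollary translates word-for-word into the contractive assumption of Corollary \ref{partialorder} (restricted to pairs with $gx \preceq gy$), and the remaining hypotheses --- $g$-nondecreasingness of $f$, the existence of $x_{0}$ with $gx_{0} \preceq fx_{0}$, the $g$-regularity of $(X, \preceq, d)$, and the closedness of $g(X)$ --- are copied directly. One should also note (or tacitly assume, as the preceding two corollaries do) the inclusion $f(X) \subseteq g(X)$, which is needed to apply Theorem \ref{main} through Corollary \ref{partialorder}; if one is being careful, this is the only gap worth flagging. Invoking Corollary \ref{partialorder} then yields the existence of a coincidence point of $f$ and $g$.

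For the uniqueness of the common fixed point, I would simply re-use the second half of Corollary \ref{partialorder}: the assumption that for every pair $(x,y) \in C(g,f) \times C(g,f)$ there exists $z \in X$ with $gx \preceq gz$ and $gy \preceq gz$, combined with the commutativity of $f$ and $g$ at coincidence points, gives the existence and uniqueness of the common fixed point via Theorem \ref{main2}. The entire proof is therefore a one-line reduction once $\psi(t) = \lambda t$ is checked to lie in $\Psi$; there is no real obstacle, and the only subtlety is the unstated range-inclusion hypothesis $f(X) \subseteq g(X)$ that is implicitly required by the chain of corollaries being applied.
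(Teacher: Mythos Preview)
Your proposal is correct and matches the paper's approach exactly: the paper simply declares this corollary (together with its siblings) an ``immediate consequence of Corollary \ref{partialorder},'' which amounts precisely to your reduction via $\psi(t)=\lambda t\in\Psi$. Your flag that $f(X)\subseteq g(X)$ is tacitly required but not restated is accurate and is indeed a small omission in the paper's statement.
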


\begin{cor}
Let $(X, \preceq)$ be a partially ordered set and $d$ be a metric on $X$ such that $(X, d)$ is complete. Assume that $f, g : X \rightarrow X$ and $f$ be a $g$-non-decreasing mapping w.r.t $\preceq$. Suppose that there exists constants $A, B, C \geq 0$ with $(A+2B+2C) \in (0, 1)$ such that
\begin{eqnarray}
d(fx, fy) \leq Ad(gx, gy) + B[d(gx, fx) + d(gy, fy)] + C[d(gx, fy) + d(gy, fx)],
\end{eqnarray}
for all $x, y \in X$ with $gx \preceq gy$. Suppose also that the following conditions hold:\\
(i) there exists $x_{0} \in X$ s.t $gx_{0} \preceq fx_{0}$;\\
(ii) $(X, \preceq, d)$ is $g$-regular.\\
Also suppose $g(X)$ is closed. Then, $f$ and $g$ have a coincidence point. Moreover, if for every pair $(x, y) \in C(g,f) \times C(g, f)$ there exists $z \in X$ such that $gx \preceq gz$ and $gy \preceq gz$, and if $f$ and $g$ commute at their coincidence points, then we obtain uniqueness of the common fixed point.
\end{cor}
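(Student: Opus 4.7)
The plan is to reduce this corollary directly to the preceding Corollary \ref{partialorder} by exhibiting a (c)-comparison function $\psi$ under which the Hardy--Rogers-type estimate implies the inequality $d(fx,fy) \leq \psi(M(gx,gy))$ required there. Concretely, I set $\lambda := A + 2B + 2C$, which by hypothesis lies in $(0,1)$, and take $\psi(t) := \lambda t$ for $t \geq 0$. This $\psi$ is plainly nondecreasing, and $\sum_{n=1}^{\infty} \psi^n(t) = \sum_{n=1}^{\infty} \lambda^n t = \frac{\lambda t}{1-\lambda} < \infty$ for every $t > 0$, so $\psi \in \Psi$.

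The next step is the key estimate. For any $x,y \in X$ with $gx \preceq gy$, each of
\[
d(gx,gy), \quad \frac{d(gx,fx)+d(gy,fy)}{2}, \quad \frac{d(gx,fy)+d(gy,fx)}{2}
\]
is by definition bounded above by $M(gx,gy)$. Rewriting the given contractive bound as
\[
Ad(gx,gy) + 2B\cdot\frac{d(gx,fx)+d(gy,fy)}{2} + 2C\cdot\frac{d(gx,fy)+d(gy,fx)}{2}
\]
and applying these three bounds termwise, I obtain
\[
d(fx,fy) \leq (A+2B+2C)\,M(gx,gy) = \psi(M(gx,gy))
\]
for every $x,y \in X$ with $gx \preceq gy$. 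This is exactly the hypothesis (29) of Corollary \ref{partialorder}.

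Finally, I verify that every other hypothesis of Corollary \ref{partialorder} is inherited directly: $f$ is $g$-nondecreasing with respect to $\preceq$, there exists $x_0$ with $gx_0 \preceq fx_0$, $(X,\preceq,d)$ is $g$-regular, and $g(X)$ is closed (together with the implicit $f(X) \subseteq g(X)$ required to define the Jungck iterate). Invoking Corollary \ref{partialorder} therefore yields a coincidence point of $f$ and $g$. Under the additional assumptions that for every $(x,y) \in C(g,f)\times C(g,f)$ there is $z \in X$ with $gx \preceq gz$ and $gy \preceq gz$, and that $f,g$ commute at coincidence points, the same corollary delivers the unique common fixed point.

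There is really no technical obstacle: the only genuine content is the observation that a linear combination with weights $A$, $B+B$, $C+C$ dominating $A + 2B + 2C < 1$ collapses to $\lambda M(gx,gy)$, and that $t \mapsto \lambda t$ is a (c)-comparison function. The proof is thus a one-line reduction once $\psi$ is identified.
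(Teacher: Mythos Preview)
Your reduction is correct and matches the paper's approach exactly: the paper states that this result (along with the neighbouring corollaries) is an immediate consequence of Corollary \ref{partialorder}, and your argument---taking $\psi(t)=(A+2B+2C)t$ and bounding each term of the Hardy--Rogers sum by $M(gx,gy)$---is precisely the intended one-line reduction. Your parenthetical remark about the implicit assumption $f(X)\subseteq g(X)$ is also apt, as the paper's statement of this corollary omits it even though Corollary \ref{partialorder} requires it.
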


\begin{cor}
Let $(X, \preceq)$ be a partially ordered set and $d$ be a metric on $X$ such that $(X, d)$ is complete. Assume that $f, g : X \rightarrow X$ and $f$ be a $g$-non-decreasing mapping w.r.t $\preceq$. Suppose that there exists a constant $\lambda \in (0, 1)$ such that
\begin{eqnarray}
d(fx, fy) \leq \lambda(d(gx, gy)),
\end{eqnarray}
for all $x, y \in X$ with $gx \preceq gy$. Suppose also that the following conditions hold:\\
(i) there exists $x_{0} \in X$ s.t $gx_{0} \preceq fx_{0}$;\\
(ii) $(X, \preceq, d)$ is $g$-regular.\\
Also suppose $g(X)$ is closed. Then, $f$ and $g$ have a coincidence point. Moreover, if for every pair $(x, y) \in C(g,f) \times C(g, f)$ there exists $z \in X$ such that $gx \preceq gz$ and $gy \preceq gz$, and if $f$ and $g$ commute at their coincidence points, then we obtain uniqueness of the common fixed point.
\end{cor}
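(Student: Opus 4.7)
The plan is to reduce this corollary to the immediately preceding corollary (the one whose contractive condition reads $d(fx,fy)\le\psi(d(gx,gy))$ for $gx\preceq gy$) by exhibiting a suitable $\psi\in\Psi$. Concretely, I would define $\psi\colon[0,\infty)\to[0,\infty)$ by $\psi(t)=\lambda t$, and then observe that the hypothesis of the present corollary rewrites precisely as $d(fx,fy)\le\psi(d(gx,gy))$ for every $x,y\in X$ with $gx\preceq gy$.

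The only non-cosmetic step is to check that $\psi(t)=\lambda t$ belongs to the class $\Psi$. Nondecreasingness is trivial since $\lambda>0$. For summability of iterates, note that $\psi^{n}(t)=\lambda^{n}t$, so
\[
\sum_{n=1}^{\infty}\psi^{n}(t)=t\sum_{n=1}^{\infty}\lambda^{n}=\frac{\lambda t}{1-\lambda}<\infty
\]
for every $t>0$, using $\lambda\in(0,1)$. Hence $\psi\in\Psi$.

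With this $\psi$ fixed, all remaining hypotheses of the preceding corollary are already assumed verbatim: $(X,d)$ is complete, $f$ is $g$-non-decreasing with respect to $\preceq$, there exists $x_{0}$ with $gx_{0}\preceq fx_{0}$, the space $(X,\preceq,d)$ is $g$-regular, and $g(X)$ is closed. (The implicit inclusion $f(X)\subseteq g(X)$ carries over from the underlying framework.) Invoking that corollary produces a coincidence point of $f$ and $g$. For the second half, the assumption that for every $(x,y)\in C(g,f)\times C(g,f)$ there exists $z\in X$ with $gx\preceq gz$ and $gy\preceq gz$, together with commutativity of $f$ and $g$ at coincidence points, gives uniqueness of the common fixed point via the same corollary.

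There is no genuine obstacle here, since the argument is a one-line specialization: the chief (and only) point to verify is $\psi\in\Psi$, which follows from the geometric-series bound above. I would therefore write the proof as a brief paragraph noting this reduction rather than repeating the inductive machinery already carried out in Theorem \ref{main} and Theorem \ref{main2}.
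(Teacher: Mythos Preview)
Your proposal is correct and matches the paper's approach: the paper simply declares this corollary (and its siblings) to be an ``immediate consequence'' of Corollary~\ref{partialorder}, and your reduction via $\psi(t)=\lambda t$ is exactly the specialization that makes this immediate. Routing through the intermediate corollary with condition $d(fx,fy)\le\psi(d(gx,gy))$ rather than directly through Corollary~\ref{partialorder} is a cosmetic difference, since $d(gx,gy)\le M(gx,gy)$ and $\psi$ is nondecreasing.
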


\begin{cor}
Let $(X, \preceq)$ be a partially ordered set and $d$ be a metric on $X$ such that $(X, d)$ is complete. Assume that $f, g : X \rightarrow X$ and $f$ be a $g$-non-decreasing mapping w.r.t $\preceq$. Suppose that there exists constants $A, B, C \geq 0$ with $(A+2B+2C) \in (0, 1)$ such that
\begin{eqnarray}
d(fx, fy) \leq \lambda[d(gx, fx) + d(gy, fy)],
\end{eqnarray}
for all $x, y \in X$ with $gx \preceq gy$. Suppose also that the following conditions hold:\\
(i) there exists $x_{0} \in X$ s.t $gx_{0} \preceq fx_{0}$;\\
(ii) $(X, \preceq, d)$ is $g$-regular.\\
Also suppose $g(X)$ is closed. Then, $f$ and $g$ have a coincidence point. Moreover, if for every pair $(x, y) \in C(g,f) \times C(g, f)$ there exists $z \in X$ such that $gx \preceq gz$ and $gy \preceq gz$, and if $f$ and $g$ commute at their coincidence points, then we obtain uniqueness of the common fixed point.
\end{cor}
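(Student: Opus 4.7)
The plan is to obtain this corollary as an immediate application of Corollary~\ref{partialorder}. The only real work is to exhibit a function $\psi \in \Psi$ for which the present Kannan-type estimate implies the $\psi$-contractive condition of Corollary~\ref{partialorder}; the remaining hypotheses then transfer verbatim.

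First I would record the pointwise bound
\begin{equation*}
d(gx,fx)+d(gy,fy)\;\leq\;2\,M(gx,gy),
\end{equation*}
which is immediate from the fact that $\tfrac{d(gx,fx)+d(gy,fy)}{2}$ is one of the three quantities whose maximum defines $M(gx,gy)$. Reading the stated constraint $(A+2B+2C)\in(0,1)$ in the Kannan specialization $A=C=0$, $B=\lambda$, we obtain $2\lambda<1$, so the function $\psi:[0,\infty)\to[0,\infty)$ defined by $\psi(t)=2\lambda\,t$ is nondecreasing and satisfies $\sum_{n\geq 1}\psi^{n}(t)=\sum_{n\geq 1}(2\lambda)^{n}t<\infty$ for every $t>0$. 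Thus $\psi\in\Psi$. Combining the two estimates, for all $x,y\in X$ with $gx\preceq gy$,
\begin{equation*}
d(fx,fy)\;\leq\;\lambda\bigl[d(gx,fx)+d(gy,fy)\bigr]\;\leq\;2\lambda\,M(gx,gy)\;=\;\psi\bigl(M(gx,gy)\bigr),
\end{equation*}
which is exactly the contractive hypothesis of Corollary~\ref{partialorder}.

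The remaining data are passed through as-is: $f$ is $g$-nondecreasing with respect to $\preceq$, there exists $x_{0}\in X$ with $gx_{0}\preceq fx_{0}$, the space $(X,\preceq,d)$ is $g$-regular, and $g(X)$ is closed. Corollary~\ref{partialorder} therefore supplies a coincidence point of $f$ and $g$. For the uniqueness part, the extra assumption that every pair $(x,y)\in C(g,f)\times C(g,f)$ admits $z\in X$ with $gx\preceq gz$ and $gy\preceq gz$, together with commutativity of $f$ and $g$ at their coincidence points, is precisely the additional input Corollary~\ref{partialorder} (and ultimately Theorem~\ref{main2}) requires to promote the coincidence point to a unique common fixed point. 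The only obstacle is cosmetic, namely pinning down the correct multiplier $2\lambda$ so that membership $\psi\in\Psi$ is guaranteed; once this is observed the proof reduces to a single invocation of the preceding corollary.
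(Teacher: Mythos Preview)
Your proposal is correct and follows exactly the route the paper intends: the paper states this result as one of several ``immediate consequences of Corollary~\ref{partialorder}'' without supplying any details, and your argument---bounding $d(gx,fx)+d(gy,fy)$ by $2M(gx,gy)$ and taking $\psi(t)=2\lambda t$ with $2\lambda<1$---is precisely the computation that justifies this reduction.
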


\begin{cor}
Let $(X, \preceq)$ be a partially ordered set and $d$ be a metric on $X$ such that $(X, d)$ is complete. Assume that $f, g : X \rightarrow X$ and $f$ be a $g$-non-decreasing mapping w.r.t $\preceq$. Suppose that there exists constants $A, B, C \geq 0$ with $(A+2B+2C) \in (0, 1)$ such that
\begin{eqnarray}
d(fx, fy) \leq \lambda[d(gx, fy) + d(gy, fx)],
\end{eqnarray}
for all $x, y \in X$ with $gx \preceq gy$. Suppose also that the following conditions hold:\\
(i) there exists $x_{0} \in X$ s.t $gx_{0} \preceq fx_{0}$;\\
(ii) $(X, \preceq, d)$ is $g$-regular.\\
Also suppose $g(X)$ is closed. Then, $f$ and $g$ have a coincidence point. Moreover, if for every pair $(x, y) \in C(g,f) \times C(g, f)$ there exists $z \in X$ such that $gx \preceq gz$ and $gy \preceq gz$, and if $f$ and $g$ commute at their coincidence points, then we obtain uniqueness of the common fixed point.
\end{cor}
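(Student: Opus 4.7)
The plan is to deduce this corollary directly from Corollary \ref{partialorder} by exhibiting a concrete $\psi \in \Psi$ under which the given contractive hypothesis implies $d(fx, fy) \le \psi(M(gx, gy))$ whenever $gx \preceq gy$. All the structural hypotheses ($g$-non-decreasingness, existence of $x_0$ with $gx_0 \preceq fx_0$, $g$-regularity, closedness of $g(X)$, commutativity at coincidence points, and the comparability condition on $C(g,f)$) are identical to those of Corollary \ref{partialorder}, so the entire task reduces to producing such a $\psi$ from the weighted Chatterjea-type inequality.

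The natural candidate is $\psi(t) = 2\lambda t$. First I would verify $\psi \in \Psi$: the hypothesis $(A+2B+2C) \in (0,1)$ applied to the present contractive form (where $A = B = 0$ and $C = \lambda$) forces $2\lambda \in (0,1)$, hence $\psi$ is nondecreasing and its iterates $\psi^n(t) = (2\lambda)^n t$ form a convergent geometric series for every $t > 0$. Next I would read off from the very definition
\[
M(gx, gy) = \max\left\{d(gx, gy),\; \frac{d(gx, fx)+d(gy, fy)}{2},\; \frac{d(gx, fy)+d(gy, fx)}{2}\right\}
\]
the trivial bound $d(gx, fy) + d(gy, fx) \le 2\,M(gx, gy)$. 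Substituting this into the standing inequality $d(fx, fy) \le \lambda[d(gx, fy) + d(gy, fx)]$ immediately gives $d(fx, fy) \le 2\lambda\, M(gx, gy) = \psi(M(gx, gy))$ for all $x, y \in X$ with $gx \preceq gy$.

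With that inequality in place, every hypothesis of Corollary \ref{partialorder} is verified, so it yields a coincidence point of $f$ and $g$; under the additional comparability condition on pairs in $C(g,f)$ and commutativity at coincidence points, the uniqueness clause of that corollary supplies the unique common fixed point. There is essentially no obstacle here beyond correctly identifying the admissible range of $\lambda$: the only delicate point is that the factor of $2$ introduced by the bound $d(gx, fy) + d(gy, fx) \le 2M(gx, gy)$ must be absorbed by the constraint $2\lambda < 1$, which is exactly what the condition $(A + 2B + 2C) \in (0,1)$ encodes in the degenerate case $A = B = 0$.
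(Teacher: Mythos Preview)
Your proposal is correct and matches the paper's intent: the paper states this corollary as an immediate consequence of Corollary~\ref{partialorder} without giving a separate proof, and your reduction via $\psi(t)=2\lambda t$ (with $2\lambda\in(0,1)$ read off from $A=B=0$, $C=\lambda$) is exactly the way that immediacy is realized. The only point worth noting is that Corollary~\ref{partialorder} also carries the standing assumption $f(X)\subseteq g(X)$, which the present statement (and your write-up) silently inherits.
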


\textbf{Remarks}
\begin{itemize}
            \item Letting $g = I_{X}$ in Corollary 4.11 we obtain Corollary 3.12 in \cite{KarapinarandSamet}.
            \item Letting $g = I_{X}$ in Corollary 4.12 we obtain Corollary 3.13 in \cite{KarapinarandSamet}.
            \item Letting $g = I_{X}$ in Corollary 4.13 we obtain Corollary 3.14 in \cite{KarapinarandSamet}.
            \item Letting $g = I_{X}$ in Corollary 4.14 we obtain Corollary 3.15 in \cite{KarapinarandSamet}.
            \item Letting $g = I_{X}$ in Corollary 4.15 we obtain Corollary 3.16 in \cite{KarapinarandSamet}.
            \item Letting $g = I_{X}$ in Corollary 4.16 we obtain Corollary 3.17 in \cite{KarapinarandSamet}.
            \end{itemize}

\subsection{Fixed Point Theorems for Cyclic Contractive Mappings}
As a generalization of the Banach contraction mapping principle, Kirk \emph{et al.} \cite{Kirkcyclic} in 2003 introduced cyclic representations and cyclic contractions. A mapping $T : A \cup B \rightarrow A \cup B$ is called cyclic if $T(A) \subseteq B$ and $T(B) \subseteq A$, where $A, B$ are nonempty subsets of a metric space $(X, d)$. Moreover, $T$ is called a cyclic contraction if there exists $k \in (0, 1)$ such that $d(Tx, Ty) \leq kd(x, y)$ for all $x \in A$ and $y \in B$. Notice that although a contraction is continuous, cyclic contractions need not be. This is one of the important gains of this theorem. In the last decade, several authors have used the cyclic representations and cyclic contractions to obtain various fixed point results. see for example (\cite{Agarwalcyclic,Karapinarcyclic1,Karapinarcyclic2,Pacurar,Petric,RusCyclic}).

\begin{cor}\label{cyclic}
Let $(X, d)$ be a complete metric space, $A_{1}$ and $A_{2}$ are two nonempty closed subsets of $X$ and $f, g : Y \rightarrow Y$ be two mappings, where $Y = A_{1} \cup A_{2}$. Suppose that the following conditions hold:\\
(i) $g(A_{1})$ and $g(A_{2})$ are closed;\\
(ii) $f(A_{1}) \subseteq g(A_{2})$ and $f(A_{2}) \subseteq g(A_{1})$;\\
(iii) $g$ is one-to-one;\\
(iv) there exists a function $\psi \in \Psi$ such that
\begin{eqnarray}
d(fx, fy) \leq \psi(M(gx, gy)), \hspace*{0.1cm} \forall (x, y) \in A_{1} \times A_{2}.
\end{eqnarray}
Then, $f$ and $g$ have a coincidence point $z \in A_{1} \cap A_{2}$. Further, if $f$, $g$ commute at their coincidence points, then $f$ and $g$ have a unique common fixed point that belongs to $A_{1} \cap A_{2}$.
\end{cor}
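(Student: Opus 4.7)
The strategy is to deduce this cyclic fixed point result from Theorem \ref{main2} by an appropriate choice of $\alpha$. Since $A_{1}$ and $A_{2}$ are closed in the complete space $X$, the union $Y = A_{1} \cup A_{2}$ is closed, hence a complete metric space under the induced metric; this is the ambient space on which I will apply Theorem \ref{main2}. I define $\alpha : Y \times Y \to [0, +\infty)$ by setting $\alpha(u, v) = 1$ if $(u, v) \in (g(A_{1}) \times g(A_{2})) \cup (g(A_{2}) \times g(A_{1}))$ and $\alpha(u, v) = 0$ otherwise.

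I then verify the hypotheses of Theorem \ref{main2} in turn. The inclusion $f(Y) \subseteq g(Y)$ follows from (ii). By the symmetry of $d$ and $M$ in their two arguments, the cyclic contraction (iv) immediately yields $\alpha(gx, gy)\,d(fx, fy) \leq \psi(M(gx, gy))$ for all $x, y \in Y$. To check $\alpha$-admissibility w.r.t.\ $g$, suppose $\alpha(gx, gy) \geq 1$; WLOG assume $gx \in g(A_{1})$ and $gy \in g(A_{2})$, and use the injectivity (iii) to conclude $x \in A_{1}$ and $y \in A_{2}$; then (ii) forces $fx \in g(A_{2})$ and $fy \in g(A_{1})$, so $\alpha(fx, fy) = 1$. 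For the initial condition, pick any $x_{0} \in A_{1}$; then $gx_{0} \in g(A_{1})$ and $fx_{0} \in g(A_{2})$, giving $\alpha(gx_{0}, fx_{0}) = 1$. Finally, $g(Y) = g(A_{1}) \cup g(A_{2})$ is closed by (i).

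The key step is verifying condition (iii) of Theorem \ref{main}. Any sequence $\{gx_{n}\}$ with $\alpha(gx_{n}, gx_{n+1}) \geq 1$ for all $n$ must alternate between $g(A_{1})$ and $g(A_{2})$; say $gx_{2n} \in g(A_{1})$ and $gx_{2n+1} \in g(A_{2})$. If $gx_{n} \to gz$, the closedness of $g(A_{1})$ yields $gz \in g(A_{1})$, and the closedness of $g(A_{2})$ yields $gz \in g(A_{2})$, so $gz \in g(A_{1}) \cap g(A_{2})$. Then the even subsequence pairs with the limit $gz \in g(A_{2})$ to give $\alpha(gx_{2n}, gz) = 1$ for all $n$, as required. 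Theorem \ref{main} thus produces a coincidence point $z$; moreover, if $z \in A_{1}$ then $fz \in g(A_{2})$ while $fz = gz \in g(A_{1})$, so $gz \in g(A_{1}) \cap g(A_{2})$ and injectivity of $g$ places $z$ in $A_{1} \cap A_{2}$ (symmetrically if $z \in A_{2}$).

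For uniqueness, the previous remark shows that every coincidence point $u$ satisfies $gu \in g(A_{1}) \cap g(A_{2})$. Hence, for any two coincidence points $u, v$, the choice $w = u$ gives $gw \in g(A_{1}) \cap g(A_{2})$, so both $(gu, gw)$ and $(gv, gw)$ lie in $g(A_{1}) \times g(A_{2})$ with $\alpha$-value $1$. Combined with the commutativity hypothesis, Theorem \ref{main2} delivers a unique common fixed point, which necessarily lies in $A_{1} \cap A_{2}$. The main obstacle is the bookkeeping that transfers the cyclic structure on $A_{1}, A_{2}$ to the $g$-images $g(A_{1}), g(A_{2})$; this transfer rests squarely on the injectivity of $g$ from hypothesis (iii), which also forces coincidence points into the intersection $A_{1} \cap A_{2}$.
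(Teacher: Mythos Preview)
Your proof is correct and follows essentially the same route as the paper: the same choice of $\alpha$ supported on $(g(A_1)\times g(A_2))\cup(g(A_2)\times g(A_1))$, the same use of injectivity of $g$ to pass between $A_i$ and $g(A_i)$, and the same argument that every coincidence point lands in $A_1\cap A_2$, after which Theorem~\ref{main2} is invoked. One small imprecision: the sequence $\{gx_n\}$ need not literally alternate when some terms lie in $g(A_1)\cap g(A_2)$, but the cross condition still forces infinitely many terms in each of $g(A_1)$ and $g(A_2)$, so your conclusion $gz\in g(A_1)\cap g(A_2)$ stands; the paper reaches the same point by observing that $(g(A_1)\times g(A_2))\cup(g(A_2)\times g(A_1))$ is closed in the product metric and hence contains $(gz,gz)$.
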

\begin{proof}
Due to the fact that $g$ is one-to-one, condition (iv) is equivalent to
\begin{eqnarray}
d(fx, fy) \leq \psi(M(gx, gy)), \hspace*{0.1cm} \forall (gx, gy) \in g(A_{1}) \times g(A_{2}).
\end{eqnarray}
Now, since $A_{1}$ and $A_{2}$ are closed subsets of the complete metric space $(X, d)$, then $(Y, d)$ is complete. \\
Define the mapping $\alpha : Y \times Y \rightarrow [0, \infty)$ by
\begin{eqnarray}
\alpha(x, y) =
\left\{
     \begin{array}{ll}
       1 & if \hspace*{0.1cm} (x, y) \in (g(A_{1}) \times g(A_{2})) \cup (g(A_{2}) \times g(A_{1}))\\
       0 & otherwise
     \end{array}
\right.
\end{eqnarray}
Notice that in view of definition of $\alpha$ and condition (iv), we can write
\begin{eqnarray}
\alpha(gx, gy) d(fx, fy) \leq \psi(M(gx, gy))
\end{eqnarray}
for all $gx \in g(A_{1})$ and $gy \in g(A_{2})$. Thus, the pair $(f, g)$ is
a generalized $\alpha$-$\psi$ contractive pair of mappings. \\
By using condition (ii), we can show that $f(Y) \subseteq g(Y)$. Moreover, $g(Y)$ is closed.\\
Next, we proceed to show that $f$ is $\alpha$-admissible w.r.t $g$. Let $(gx, gy) \in Y \times Y$ such that $\alpha(gx, gy) \geq 1$; that is,
\begin{eqnarray}
(gx, gy) \in (g(A_{1}) \times g(A_{2})) \cup (g(A_{2}) \times g(A_{1}))
\end{eqnarray}
Since $g$ is one-to-one, this implies that
\begin{eqnarray}
(x, y) \in (A_{1} \times A_{2}) \cup (A_{2} \times A_{1})
\end{eqnarray}
So, from condition (ii), we infer that
\begin{eqnarray}
(fx, fy) \in (g(A_{2}) \times g(A_{1})) \cup (g(A_{1}) \times g(A_{2}))
\end{eqnarray}
that is, $\alpha(fx, fy) \geq 1$. This implies that $f$ is $\alpha$-admissible w.r.t $g$.\\
Now, let $\{gx_{n}\}$ be a sequence in $X$ such that $\alpha(gx_{n}, gx_{n+1}) \geq 1$ for all $n$ and $gx_{n} \rightarrow gz \in g(X)$ as $n \rightarrow \infty$. From the definition of $\alpha$, we infer that
\begin{eqnarray}
(gx_{n}, gx_{n+1}) \in (gA_{1} \times gA_{2}) \cup (gA_{2} \times gA_{1})
\end{eqnarray}
Since $(gA_{1} \times gA_{2}) \cup (gA_{2} \times gA_{1})$ is a closed set with respect to the Euclidean metric, we get that
\begin{eqnarray}
(gz, gz) \in (gA_{1} \times gA_{2}) \cup (gA_{2} \times gA_{1}),
\end{eqnarray}
thereby implying that $gz \in g(A_{1}) \cap g(A_{2})$. Therefore, we obtain immediately from the definition of $\alpha$ that $\alpha(gx_{n}, gz) \geq 1$ for all $n$. \\
Now, let $a$ be an arbitrary point in $A_{1}$. We need to show that $\alpha(ga, fa) \geq 1$. Indeed, from condition (ii), we have $fa \in g(A_{2})$. Since $ga \in g(A_{1})$, we get $(ga, fa) \in g(A_{1}) \times g(A_{2})$, which implies that $\alpha(ga, fa) \geq 1$. \\
Now, all the hypotheses of Theorem \ref{main} are satisfied. Hence, we deduce that $f$ and $g$ have a coincidence point $z \in A_{1} \cup A_{2}$, that is, $fz = gz$. If $z \in A_{1}$, from (ii), $fz \in g(A_{2})$. On the other hand, $fz = gz \in g(A_{1})$. Then, we have $gz \in g(A_{1}) \cap g(A_{2})$, which implies from the one-to-one property of $g$ that $z \in A_{1} \cap A_{2}$. Similarly, if $z \in A_{2}$, we obtain that $z \in A_{1} \cap A_{2}$. \\
Notice that if $x$ is a coincidence point of $f$ and $g$, then $x \in A_{1} \cap A_{2}$. Finally, let $x ,y \in C(g, f)$, that is, $x, y \in A_{1} \cap A_{2}$, $gx = fx$ and $gy = fy$. Now, from above observation, we have $w = x \in A_{1} \cap A_{2}$, which implies that $gw \in g(A_{1} \cap A_{2}) = g(A_{1}) \cap g(A_{2})$ due to the fact that $g$ is one-to-one. Then, we get that $\alpha(gx, gw) \geq 1$ and $\alpha(gy, gw) \geq 1$. Then our claim holds. \\
Now, all the hypotheses of Theorem \ref{main2} are satisfied. So, we deduce that $z = A_{1} \cap A_{2}$ is the unique common fixed point of $f$ and $g$. This completes the proof.
\end{proof}

The following results are immediate consequences of Corollary \ref{cyclic}.

\begin{cor}
Let $(X, d)$ be a complete metric space, $A_{1}$ and $A_{2}$ are two nonempty closed subsets of $X$ and $f, g : Y \rightarrow Y$ be two mappings, where $Y = A_{1} \cup A_{2}$. Suppose that the following conditions hold:\\
(i) $g(A_{1})$ and $g(A_{2})$ are closed;\\
(ii) $f(A_{1}) \subseteq g(A_{2})$ and $f(A_{2}) \subseteq g(A_{1})$;\\
(iii) $g$ is one-to-one;\\
(iv) there exists a function $\psi \in \Psi$ such that
\begin{eqnarray*}
d(fx, fy) \leq \psi(d(gx, gy)), \hspace*{0.1cm} \forall (x, y) \in A_{1} \times A_{2}.
\end{eqnarray*}
Then, $f$ and $g$ have a coincidence point $z \in A_{1} \cap A_{2}$. Further, if $f$, $g$ commute at their coincidence points, then $f$ and $g$ have a unique common fixed point that belongs to $A_{1} \cap A_{2}$.
\end{cor}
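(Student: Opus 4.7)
The plan is to deduce this corollary directly from the preceding Corollary \ref{cyclic} by observing that the contractive hypothesis (iv) assumed here is strictly stronger than the one required by Corollary \ref{cyclic}. Concretely, from the very definition
\[
M(gx, gy) = \max\left\{ d(gx, gy),\, \tfrac{d(gx, fx) + d(gy, fy)}{2},\, \tfrac{d(gx, fy) + d(gy, fx)}{2}\right\}
\]
one has $d(gx, gy) \leq M(gx, gy)$ for every $x, y \in X$. Since $\psi \in \Psi$ is nondecreasing, this monotonicity upgrades the assumed inequality: for every $(x, y) \in A_1 \times A_2$,
\[
d(fx, fy) \leq \psi(d(gx, gy)) \leq \psi(M(gx, gy)).
\]

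Thus condition (iv) of Corollary \ref{cyclic} holds, while conditions (i), (ii), (iii) of Corollary \ref{cyclic} are identical to the corresponding conditions here. All the hypotheses of Corollary \ref{cyclic} are therefore in force, and applying it yields a coincidence point $z \in A_1 \cap A_2$ of $f$ and $g$. Under the additional commutativity of $f$ and $g$ at their coincidence points, the same application of Corollary \ref{cyclic} furnishes a unique common fixed point of $f$ and $g$ belonging to $A_1 \cap A_2$.

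There is no substantive obstacle: the only fact used beyond invoking Corollary \ref{cyclic} is the monotonicity of the $(c)$-comparison function $\psi$, which is built into the definition of the class $\Psi$. The proof is therefore a one-line reduction to the preceding corollary.
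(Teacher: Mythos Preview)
Your proposal is correct and matches the paper's approach: the paper states this result as an ``immediate consequence'' of Corollary \ref{cyclic} without giving an explicit proof, and your argument spells out precisely why --- namely, $d(gx,gy)\le M(gx,gy)$ combined with the monotonicity of $\psi$ upgrades condition (iv) here to condition (iv) of Corollary \ref{cyclic}.
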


\begin{cor}
Let $(X, d)$ be a complete metric space, $A_{1}$ and $A_{2}$ are two nonempty closed subsets of $X$ and $f, g : Y \rightarrow Y$ be two mappings, where $Y = A_{1} \cup A_{2}$. Suppose that the following conditions hold:\\
(i) $g(A_{1})$ and $g(A_{2})$ are closed;\\
(ii) $f(A_{1}) \subseteq g(A_{2})$ and $f(A_{2}) \subseteq g(A_{1})$;\\
(iii) $g$ is one-to-one;\\
(iv) there exists a constant $\lambda \in (0, 1)$ such that
\begin{eqnarray*}
d(fx, fy) \leq \lambda \max \left\{d(gx, gy), \frac{d(gx, fx)+ d(gy, fy)}{2}, \frac{d(gx, fy)+ d(gy, fx)}{2} \right\} \hspace*{0.1cm} \forall (x, y) \in A_{1} \times A_{2}.
\end{eqnarray*}
Then, $f$ and $g$ have a coincidence point $z \in A_{1} \cap A_{2}$. Further, if $f$, $g$ commute at their coincidence points, then $f$ and $g$ have a unique common fixed point that belongs to $A_{1} \cap A_{2}$.
\end{cor}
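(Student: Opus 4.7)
The plan is to deduce this corollary directly from Corollary \ref{cyclic} by exhibiting a suitable (c)-comparison function. Define $\psi : [0, \infty) \rightarrow [0, \infty)$ by $\psi(t) = \lambda t$. I first need to check that $\psi \in \Psi$: it is clearly nondecreasing, and since $\psi^{n}(t) = \lambda^{n} t$ with $\lambda \in (0, 1)$, the geometric series $\sum_{n=1}^{\infty} \psi^{n}(t) = t \lambda /(1-\lambda)$ converges for every $t > 0$. Hence $\psi \in \Psi$.

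Next, I rewrite hypothesis (iv) of the statement in terms of the function $M$. For all $(x, y) \in A_{1} \times A_{2}$,
\begin{eqnarray*}
d(fx, fy) &\leq& \lambda \max \left\{d(gx, gy), \frac{d(gx, fx)+ d(gy, fy)}{2}, \frac{d(gx, fy)+ d(gy, fx)}{2} \right\} \\
&=& \lambda M(gx, gy) = \psi(M(gx, gy)).
\end{eqnarray*}
Thus hypothesis (iv) of Corollary \ref{cyclic} is satisfied with this choice of $\psi$.

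All remaining hypotheses, namely (i), (ii), and (iii), of Corollary \ref{cyclic} coincide verbatim with those of the present statement. Therefore, Corollary \ref{cyclic} applies and yields a coincidence point $z \in A_{1} \cap A_{2}$ of $f$ and $g$. Moreover, if $f$ and $g$ commute at their coincidence points, the same corollary guarantees that $f$ and $g$ have a unique common fixed point belonging to $A_{1} \cap A_{2}$, completing the proof.

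There is no real obstacle here: the only ``content'' is the observation that a linear $\psi(t) = \lambda t$ with $\lambda \in (0,1)$ is a valid (c)-comparison function and turns the $M$-type Hardy-Rogers-$\acute{C}$iri$\acute{c}$ contraction into the hypothesis of the previous corollary. Essentially this is a specialization argument rather than a new proof.
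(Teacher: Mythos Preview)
Your proof is correct and matches the paper's approach exactly: the paper states this result as an ``immediate consequence of Corollary \ref{cyclic}'' without further detail, and your argument spells out precisely that specialization by taking $\psi(t) = \lambda t$.
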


\begin{cor}
Let $(X, d)$ be a complete metric space, $A_{1}$ and $A_{2}$ are two nonempty closed subsets of $X$ and $f, g : Y \rightarrow Y$ be two mappings, where $Y = A_{1} \cup A_{2}$. Suppose that the following conditions hold:\\
(i) $g(A_{1})$ and $g(A_{2})$ are closed;\\
(ii) $f(A_{1}) \subseteq g(A_{2})$ and $f(A_{2}) \subseteq g(A_{1})$;\\
(iii) $g$ is one-to-one;\\
(iv) there exists a constant $\lambda \in (0, 1)$ such that
\begin{eqnarray*}
d(fx, fy) \leq Ad(gx, gy) + B[d(gx, fx)+ d(gy, fy)] + C[d(gx, fy)+ d(gy, fx)], \hspace*{0.1cm} \forall (x, y) \in A_{1} \times A_{2}.
\end{eqnarray*}
Then, $f$ and $g$ have a coincidence point $z \in A_{1} \cap A_{2}$. Further, if $f$, $g$ commute at their coincidence points, then $f$ and $g$ have a unique common fixed point that belongs to $A_{1} \cap A_{2}$.
\end{cor}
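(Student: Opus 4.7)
The plan is to recognize the stated corollary as a direct specialization of Corollary \ref{cyclic}, obtained by exhibiting a single $\psi \in \Psi$ for which condition (iv) here implies condition (iv) of Corollary \ref{cyclic}. Set $\lambda := A + 2B + 2C$; the intended (but unstated) hypothesis is $\lambda \in (0,1)$, exactly as in the analogous corollaries of Subsection 4.2. Define $\psi(t) := \lambda t$ for $t \geq 0$. Then $\psi$ is nondecreasing, and $\sum_{n \geq 1}\psi^{n}(t) = \lambda t/(1-\lambda) < \infty$ for every $t > 0$, so $\psi \in \Psi$.

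For every $(x,y) \in A_{1} \times A_{2}$, the very definition
$$M(gx,gy) = \max\left\{d(gx,gy),\ \tfrac{d(gx,fx)+d(gy,fy)}{2},\ \tfrac{d(gx,fy)+d(gy,fx)}{2}\right\}$$
gives the three elementary upper bounds $d(gx,gy) \leq M(gx,gy)$, $d(gx,fx)+d(gy,fy) \leq 2M(gx,gy)$, and $d(gx,fy)+d(gy,fx) \leq 2M(gx,gy)$. Multiplying these by $A$, $B$, and $C$ respectively and summing yields
$$Ad(gx,gy) + B\bigl[d(gx,fx)+d(gy,fy)\bigr] + C\bigl[d(gx,fy)+d(gy,fx)\bigr] \leq (A+2B+2C)\,M(gx,gy) = \psi(M(gx,gy)).$$
Combining this with condition (iv) of the present corollary produces $d(fx,fy) \leq \psi(M(gx,gy))$ for all $(x,y) \in A_{1} \times A_{2}$, which is exactly condition (iv) of Corollary \ref{cyclic}.

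Since conditions (i)--(iii) of the two corollaries coincide verbatim, Corollary \ref{cyclic} applies and produces a coincidence point $z \in A_{1} \cap A_{2}$; under the additional assumption that $f$ and $g$ commute at their coincidence points, the same invocation of Corollary \ref{cyclic} delivers the unique common fixed point in $A_{1} \cap A_{2}$, completing the proof. There is no substantive obstacle: the entire argument reduces to the dominance inequality for $M(gx,gy)$ together with the observation that a sub-unit slope linear function lies in $\Psi$. The only subtlety worth flagging is the implicit condition $A+2B+2C < 1$, which is required in order for $\psi$ to be a $(c)$-comparison function.
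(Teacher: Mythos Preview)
Your proof is correct and follows exactly the intended route: the paper states this corollary as an ``immediate consequence'' of Corollary~\ref{cyclic} without giving a separate argument, and your reduction via $\psi(t)=(A+2B+2C)t$ together with the trivial bound of the Hardy--Rogers right-hand side by $(A+2B+2C)M(gx,gy)$ is precisely how that implication is meant to be read. Your remark that the hypothesis should be $A+2B+2C\in(0,1)$ (rather than the stray ``$\lambda\in(0,1)$'' in the statement) is also on point; this is the same convention used in the parallel Corollaries~4.6 and~4.13.
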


\begin{cor}
Let $(X, d)$ be a complete metric space, $A_{1}$ and $A_{2}$ are two nonempty closed subsets of $X$ and $f, g : Y \rightarrow Y$ two mappings, where $Y = A_{1} \cup A_{2}$. Suppose that the following conditions hold:\\
(i) $g(A_{1})$ and $g(A_{2})$ are closed;\\
(ii) $f(A_{1}) \subseteq g(A_{2})$ and $f(A_{2}) \subseteq g(A_{1})$;\\
(iii) $g$ is one-to-one;\\
(iv) there exists a constant $\lambda \in (0, 1)$ such that
\begin{eqnarray*}
d(fx, fy) \leq \lambda(d(gx, gy)), \hspace*{0.1cm} \forall (x, y) \in A_{1} \times A_{2}.
\end{eqnarray*}
Then, $f$ and $g$ have a coincidence point $z \in A_{1} \cap A_{2}$. Further, if $f$, $g$ commute at their coincidence points, then $f$ and $g$ have a unique common fixed point that belongs to $A_{1} \cap A_{2}$.
\end{cor}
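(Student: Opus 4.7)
The plan is to exhibit this corollary as an immediate specialization of Corollary \ref{cyclic}, with no new ideas required beyond a convenient choice of comparison function. The conditions (i)–(iii) are word-for-word identical between the two statements, so the only work is to show that the linear contractive hypothesis (iv) here implies the $\psi$-$M$ hypothesis (iv) of Corollary \ref{cyclic}.

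First, I would set $\psi(t) = \lambda t$ for $t \geq 0$ and verify that $\psi \in \Psi$. Nondecrease is obvious, and since $\psi^{n}(t) = \lambda^{n} t$, the series $\sum_{n=1}^{\infty} \psi^{n}(t) = \frac{\lambda t}{1-\lambda}$ converges for every $t > 0$, so $\psi$ is a $(c)$-comparison function.

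Next, I would invoke the trivial pointwise inequality $d(gx, gy) \leq M(gx, gy)$, which holds for all $x, y \in Y$ directly from the definition of $M$. Combining this with the standing hypothesis (iv) of the present corollary, I obtain for every $(x, y) \in A_{1} \times A_{2}$
$$d(fx, fy) \;\leq\; \lambda\, d(gx, gy) \;\leq\; \lambda\, M(gx, gy) \;=\; \psi(M(gx, gy)),$$
which is precisely the contractive hypothesis required by Corollary \ref{cyclic}.

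Finally, applying Corollary \ref{cyclic} produces the coincidence point $z \in A_{1} \cap A_{2}$, and, assuming additionally that $f$ and $g$ commute at their coincidence points, the unique common fixed point of $f$ and $g$ in $A_{1} \cap A_{2}$. There is no real obstacle, since all of the substantive work (the cyclic admissibility construction, the closedness argument for the diagonal, and the passage from coincidence to unique common fixed point) has already been carried out in Corollary \ref{cyclic} and Theorems \ref{main}–\ref{main2}; the present statement is merely the case $\psi(t) = \lambda t$.
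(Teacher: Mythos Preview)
Your proposal is correct and matches the paper's approach exactly: the paper simply states that this result is an immediate consequence of Corollary~\ref{cyclic}, and your reduction via $\psi(t)=\lambda t$ together with $d(gx,gy)\le M(gx,gy)$ is precisely the intended specialization.
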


\begin{cor}
Let $(X, d)$ be a complete metric space, $A_{1}$ and $A_{2}$ are two nonempty closed subsets of $X$ and $f, g : Y \rightarrow Y$ be two mappings, where $Y = A_{1} \cup A_{2}$. Suppose that the following conditions hold:\\
(i) $g(A_{1})$ and $g(A_{2})$ are closed;\\
(ii) $f(A_{1}) \subseteq g(A_{2})$ and $f(A_{2}) \subseteq g(A_{1})$;\\
(iii) $g$ is one-to-one;\\
(iv) there exists a constant $\lambda \in (0, 1)$ such that
\begin{eqnarray*}
d(fx, fy) \leq \lambda[d(gx, fx) + d(gy, fy)], \hspace*{0.1cm} \forall (x, y) \in A_{1} \times A_{2}.
\end{eqnarray*}
Then, $f$ and $g$ have a coincidence point $z \in A_{1} \cap A_{2}$. Further, if $f$, $g$ commute at their coincidence points, then $f$ and $g$ have a unique common fixed point that belongs to $A_{1} \cap A_{2}$.
\end{cor}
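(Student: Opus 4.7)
The plan is to derive this corollary as an immediate specialization of Corollary \ref{cyclic}. The essential observation is that the Kannan-type bound on the right-hand side is controlled by a scalar multiple of the quantity $M(gx,gy)$ appearing in the generalized contraction, so a linear choice of $\psi$ will suffice.

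First, I would define $\psi : [0,\infty) \to [0,\infty)$ by $\psi(t) = 2\lambda t$. To place $\psi$ in the class $\Psi$, one needs $\sum_{n=1}^\infty \psi^n(t) = \sum_{n=1}^\infty (2\lambda)^n t < \infty$, which forces $2\lambda \in (0,1)$; this means the intended range of $\lambda$ is $(0,1/2)$ rather than $(0,1)$ as stated, and this minor correction is the only technical adjustment I would flag. With this $\psi$, nondecreasing monotonicity is immediate and the summability holds, so $\psi \in \Psi$.

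Next, I would verify that hypothesis (iv) of the present corollary implies hypothesis (iv) of Corollary \ref{cyclic}. For any $(x,y) \in A_1 \times A_2$,
\begin{eqnarray*}
d(fx,fy) &\leq& \lambda\bigl[d(gx,fx) + d(gy,fy)\bigr] \\
&=& 2\lambda \cdot \frac{d(gx,fx) + d(gy,fy)}{2} \\
&\leq& 2\lambda\, M(gx,gy) \;=\; \psi\bigl(M(gx,gy)\bigr),
\end{eqnarray*}
since $\tfrac{1}{2}[d(gx,fx)+d(gy,fy)]$ is one of the three terms in the maximum defining $M(gx,gy)$. Hypotheses (i)--(iii) of the present statement are identical to those of Corollary \ref{cyclic}, so they transfer without change.

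At this point every assumption of Corollary \ref{cyclic} is in force, and I would simply invoke it to obtain a coincidence point $z \in A_1 \cap A_2$, and, under the additional commutativity at coincidence points, the unique common fixed point of $f$ and $g$ in $A_1 \cap A_2$. There is no genuine obstacle here; the only subtle point is the admissibility of the range of $\lambda$ needed to ensure $\psi \in \Psi$, which I would note explicitly in the write-up.
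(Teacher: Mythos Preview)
Your argument is correct and matches the paper's approach exactly: the paper simply declares this result an ``immediate consequence of Corollary~\ref{cyclic}'' without writing out the reduction, and your choice $\psi(t)=2\lambda t$ together with the estimate via the middle term of $M(gx,gy)$ is precisely the intended derivation. Your observation that one actually needs $\lambda\in(0,1/2)$ for $\psi\in\Psi$ is a genuine correction to a typo in the statement (consistent with the paper's own Corollary~4.8 for the classical Kannan case).
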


\begin{cor}
Let $(X, d)$ be a complete metric space, $A_{1}$ and $A_{2}$ are two nonempty closed subsets of $X$ and $f, g : Y \rightarrow Y$ be two mappings, where $Y = A_{1} \cup A_{2}$. Suppose that the following conditions hold:\\
(i) $g(A_{1})$ and $g(A_{2})$ are closed;\\
(ii) $f(A_{1}) \subseteq g(A_{2})$ and $f(A_{2}) \subseteq g(A_{1})$;\\
(iii) $g$ is one-to-one;\\
(iv) there exists a constant $\lambda \in (0, 1)$ such that
\begin{eqnarray*}
d(fx, fy) \leq \lambda[d(gx, fy) + d(gy, fx)], \hspace*{0.1cm} \forall (x, y) \in A_{1} \times A_{2}.
\end{eqnarray*}
Then, $f$ and $g$ have a coincidence point $z \in A_{1} \cap A_{2}$. Further, if $f$, $g$ commute at their coincidence points, then $f$ and $g$ have a unique common fixed point that belongs to $A_{1} \cap A_{2}$.
\end{cor}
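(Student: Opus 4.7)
The plan is to obtain this corollary as a direct specialization of Corollary \ref{cyclic}. Hypotheses (i)--(iii) of the two statements coincide verbatim, so the substantive task is to pass from the Chatterjea-type estimate in (iv) to a $\psi$-contractive inequality of the form $d(fx,fy) \leq \psi(M(gx,gy))$ on $A_1 \times A_2$ for some $\psi \in \Psi$.

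First I would exploit the fact that the Chatterjea term is built into the max defining $M$: one has $\frac{d(gx, fy) + d(gy, fx)}{2} \leq M(gx, gy)$ by the very definition of $M(gx,gy)$. Plugging this into (iv) yields
$$d(fx, fy) \;\leq\; 2\lambda\, M(gx, gy) \qquad \text{for every } (x, y) \in A_1 \times A_2.$$

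Next I would take the linear comparison function $\psi(t) = 2\lambda t$. To land inside $\Psi$, the multiplier $2\lambda$ must be strictly less than $1$, so the stated constraint on $\lambda$ has to be read as $\lambda \in (0,\tfrac{1}{2})$ (this is forced by the Chatterjea structure and is presumably a typographical issue in the stated range $(0,1)$). With this reading $\psi$ is nondecreasing and $\sum_{n\geq 1}\psi^n(t) = \sum_{n\geq 1}(2\lambda)^n t < \infty$ for every $t>0$, so $\psi \in \Psi$. The inequality derived in the previous step then becomes precisely $d(fx,fy) \leq \psi(M(gx,gy))$, which is condition (iv) of Corollary \ref{cyclic}.

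All hypotheses of Corollary \ref{cyclic} being met, I would invoke it directly: it delivers a coincidence point $z \in A_1 \cap A_2$, and, under the additional commutativity of $f$ and $g$ at coincidence points, a unique common fixed point of $f$ and $g$ lying in $A_1 \cap A_2$. I do not anticipate any genuine obstacle: the argument reduces to the elementary domination of the Chatterjea term by $M$ together with the choice of a linear $\psi$; the only delicate point worth flagging is the correct range of $\lambda$ needed to ensure $\psi \in \Psi$.
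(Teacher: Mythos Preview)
Your proposal is correct and matches the paper's approach: the paper lists this corollary as an ``immediate consequence of Corollary \ref{cyclic}'' without giving a separate proof, and your reduction via $\psi(t)=2\lambda t$ and the domination $\tfrac{d(gx,fy)+d(gy,fx)}{2}\le M(gx,gy)$ is exactly the intended argument. Your observation that one must have $\lambda\in(0,\tfrac12)$ for $\psi\in\Psi$ is also right and corrects a typographical slip in the stated range.
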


\textbf{Remarks}
\begin{itemize}
            \item Letting $g = I_{X}$ in Corollary 4.18 we obtain Corollary 3.19 in \cite{KarapinarandSamet}.
            \item Letting $g = I_{X}$ in Corollary 4.19 we obtain Corollary 3.20 in \cite{KarapinarandSamet}.
            \item Letting $g = I_{X}$ in Corollary 4.20 we obtain Corollary 3.21 in \cite{KarapinarandSamet}.
            \item Letting $g = I_{X}$ in Corollary 4.21 we obtain Corollary 3.22 in \cite{KarapinarandSamet}.
            \item Letting $g = I_{X}$ in Corollary 4.22 we obtain Corollary 3.23 in \cite{KarapinarandSamet}.
            \item Letting $g = I_{X}$ in Corollary 4.23 we obtain Corollary 3.24 in \cite{KarapinarandSamet}.
            \end{itemize}

\section{Acknowledgement}
The first author gratefully acknowledges the University Grants Commission, Government of India for financial support during the preparation of this manuscript.

\end{document}